\newtheorem{theorem}{Theorem}[section]
\newtheorem{proposition}[theorem]{Proposition}
\newtheorem{definition}[theorem]{Definition}
\newtheorem{corollary}[theorem]{Corollary}
\newtheorem{lemma}[theorem]{Lemma}
\DeclareMathOperator{\comIndex}{c}
\DeclareMathOperator{\Z}{\mathbb{Z}}
\newcommand{\CL}{\mathcal L}
\newcommand{\comGrowth}{{\bf c}}
\newcommand{\CPp}{\CL_{(p)}}
\newcommand{\CT}{\mathcal T} 
\newcommand{\nilA}{\mathcal{U}}
\newcommand{\setU}{\mathfrak{U}}
\def\R{\mathbb{R}}
\def\Q{\mathbb{Q}}
\def\Z{\mathbb{Z}}
\def\N{\mathbb{N}}
\begin{document}


\title{\textbf{Arithmetic lattices in unipotent algebraic groups}}
\author{Khalid Bou-Rabee\thanks{K.B. supported in part by NSF grant
    DMS-1405609} \and Daniel Studenmund\thanks{D.S. supported in part
    by NSF grant DMS-1246989}}
\maketitle


\begin{abstract}
Fixing an arithmetic lattice $\Gamma$ in an algebraic group $G$, the commensurability growth function assigns to each $n$ the cardinality of the set of subgroups $\Delta$ with $[\Gamma : \Gamma \cap \Delta] [\Delta: \Gamma \cap \Delta] = n$. 
This growth function gives a new setting where methods of F.~Grunewald, D.~Segal, and G.~C.~Smith's ``Subgroups of finite index in nilpotent groups" apply to study arithmetic lattices in an algebraic group.
In particular, we show that for any unipotent algebraic $\Z$-group with arithmetic lattice $\Gamma$, the Dirichlet function associated to the commensurability growth function satisfies an Euler decomposition. Moreover, the local parts are rational functions in $p^{-s}$, where the degrees of the numerator and denominator are independent of $p$. This gives regularity results for the set of arithmetic lattices in $G$.
\end{abstract}
\vskip.1in
{\small{\bf keywords:}
\emph{commensurability, lattices, unipotent algebraic groups, subgroup growth}}

\section{Introduction}

Let $G$ be an algebraic group defined over $\Z$ (an \emph{algebraic $\Z$-group}).
Two subgroups $\Delta_1$ and $\Delta_2$ of $G(\R)$ are
\emph{commensurable} if their \emph{commensurability index} 
\[
  \comIndex(\Delta_1, \Delta_2) := [\Delta_1 : \Delta_1 \cap
  \Delta_2][\Delta_2 : \Delta_1 \cap \Delta_2]
\] 
is finite.
An \emph{arithmetic lattice} of $G$ is a subgroup of $G(\R)$ that is commensurable with $G(\mathbb{Z})$.
The first purpose of this article is to show that when $G$ is
unipotent, the set of arithmetic lattices in $G$ has a great deal of
regularity. The second purpose is to bring attention to a new notion of quantifying commensurability.

The main tool of this article is the {\em commensurability growth function} $\N \to \N \cup \{ \infty \}$ assigning to each $n \in \N$ the cardinality
 $$\comGrowth_n(G(\Z), G(\R)) := | \{ \Delta \leq G(\R) : \comIndex\left(G(\Z), \Delta \right) = n \} |.$$
  We study $\comGrowth_n(G(\Z), G(\R))$ for
 groups $G$ in the class $\nilA$ of unipotent algebraic $\Z$-groups,
 starting with the fact that $\comGrowth_n(G(\Z), G(\R))$ is finite for all $n$ in
 Lemma \ref{lem:finite}.  Note that the map $G \mapsto G(\Z)$ gives the Mal'cev correspondence \cite{MR0349383} between torsion-free finitely generated nilpotent groups and unipotent $\Z$-groups (see also \cite{MR797867}).

Our method of attack is that of F.~Grunewald, D.~Segal, and G.~C.~Smith \cite{MR943928}, who studied the {\em subgroup growth function} of torsion-free finitely generated nilpotent groups $\Gamma$, defined by 
\[
a_n(\Gamma) := \{ \Delta \leq \Gamma : [\Gamma : \Delta]=n \},
\]
by decomposing the associated zeta function into a product of local
functions (see also \cite{MR1217350}). In view of the relationship $\comGrowth_n(\Gamma, \Gamma) = a_n(\Gamma)$, the function $\comGrowth_n$ extends $a_n$ to pairs of
groups. While we have the simple
relationship $\comGrowth_n(G(\Z), G(\R)) \geq a_n (G(\Z)),$ it is
clear from the results in this paper that this inequality is strict
when $G$ is unipotent. 
The difference is evident even in the one-dimensional case.

\subsection{The setting}

Our results about $\comGrowth_n$ are stated in the language of zeta functions.
Let $G$ be an algebraic group and let $\mathcal{F}$ a family of subgroups of $G(\R)$.
We associate to such a family the Dirichlet series
\begin{equation} \label{eqn:zetadef}
\zeta_{\mathcal{F}}(s) = \sum_{\Delta \in \mathcal{F}} \comIndex(G(\Z), \Delta)^{-s} = \sum_{n=1}^\infty \comGrowth_n(\mathcal{F}) n^{-s}
\end{equation}
where 
\[
  \comGrowth_n(\mathcal{F}) := | \left\{ \Delta \in \mathcal{F} :
    \comIndex(G(\Z), \Delta) = n \right\} |.
\]

Following \cite{MR943928}, there are two questions one can ask about
the sequence $\comGrowth_n(\mathcal{F})$: (a) how fast does it grow,
and (b) how regularly does it behave? Pursuing (a) is the study of
{\em commensurability growth}, which we do not directly address here. We will address (b) by decomposing $\zeta_{\mathcal{F}}$ into local parts and proving a local regularity theorem. The families we consider are the following:
\begin{eqnarray*}
\CL(G) &:=& \{ \text{all arithmetic lattices inside } G \}, \\
\CL_n(G) &:=& \{ \Delta \in \CL(G) : c(\Delta, G(\Z)) = n \},\\
\CPp(G) &:=& \{ \Delta \in \CL(G) : c(\Delta, G(\Z)) \text{ is a power of $p$} \}.
\end{eqnarray*}
\noindent
Define the global and local {\em commensurability zeta functions}:
\[
\zeta_G := \zeta_{\mathcal{L}(G)} \qquad \text{ and } \qquad \zeta_{G,p} :=
\zeta_{\CPp(G)}.
\]

\subsection{The main results}

We start with an elementary example, proved in \S \ref{sec:additivecase}.
\begin{proposition} \label{thm:MAIN}
Let $G = G_a$, the additive algebraic group, so that $G(\Z) = \Z$ and $G(\R) = \R$. Then formally,
$$
\zeta_G(s) = \frac{\zeta^2(s)}{\zeta(2s)}.
$$
\end{proposition}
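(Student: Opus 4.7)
The plan is to give an explicit description of every arithmetic lattice in $\R$, compute its commensurability index with $\Z$, and recognize the resulting Dirichlet series.

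First I would classify $\CL(G_a)$. Any subgroup $\Delta \leq \R$ commensurable with $\Z$ is finitely generated abelian and torsion-free of rank one (since commensurability preserves rank over $\Q$), so $\Delta = r\Z$ for some $r \in \R_{>0}$. The condition that $\Delta \cap \Z$ has finite index in $\Z$ forces $r \in \Q_{>0}$. Writing $r = a/b$ in lowest terms with $a, b \in \N$ and $\gcd(a,b)=1$, a direct check gives $\Delta \cap \Z = a\Z$, so
\[
[\Z : \Delta \cap \Z] = a, \qquad [\Delta : \Delta \cap \Z] = b, \qquad \comIndex(\Z, \Delta) = ab.
\]
Thus $\CL(G_a) = \{(a/b)\Z : a,b \in \N, \gcd(a,b)=1\}$, and the map $\Delta \mapsto (a,b)$ is a bijection from $\CL_n(G_a)$ to ordered coprime factorizations $n = ab$.

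Next I would count: the number of ordered pairs $(a,b)$ with $ab = n$ and $\gcd(a,b) = 1$ equals $2^{\omega(n)}$, where $\omega(n)$ is the number of distinct prime divisors of $n$, since such a factorization corresponds to assigning each prime power in the factorization of $n$ to either $a$ or $b$. Hence
\[
\zeta_G(s) = \sum_{n=1}^\infty \frac{2^{\omega(n)}}{n^s}.
\]

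Finally I would evaluate this Dirichlet series via its Euler product. Since $2^{\omega(n)}$ is multiplicative and equals $2$ on every prime power,
\[
\zeta_G(s) = \prod_p \left(1 + \sum_{k \geq 1} 2 p^{-ks}\right) = \prod_p \frac{1 + p^{-s}}{1 - p^{-s}} = \prod_p \frac{1 - p^{-2s}}{(1 - p^{-s})^2} = \frac{\zeta(s)^2}{\zeta(2s)}.
\]

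There is no real obstacle here: the only point requiring slight care is ruling out irrational $r$ and justifying the rank-one reduction, but both follow from unwinding the definition of commensurability for subgroups of $\R$. The rest is bookkeeping of coprime factorizations and a standard Euler product manipulation.
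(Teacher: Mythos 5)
Your proof is correct and follows essentially the same route as the paper: classify commensurable subgroups as $r\Z$ with $r\in\Q_{>0}$, compute $\comIndex(\Z,(a/b)\Z)=ab$ for $a/b$ in lowest terms, observe $\comGrowth_n=2^{\omega(n)}$, and evaluate the Euler product. The only difference is that you spell out the rank-one and rationality justifications a bit more explicitly than the paper does.
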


The zeta functions introduced in \cite{MR943928} decompose into Euler products.
Our first main result, proved in \S \ref{subsection:eulerdecomp}, is an
analogous decomposition of the commensurability zeta function $\zeta_G$ for $G\in \nilA$ .

\begin{proposition} \label{prop:eulerd}
If $G \in \nilA$, then 
\begin{eqnarray*}
\zeta_G(s) &=& \prod_p \zeta_{G,p}(s)
\end{eqnarray*}
as formal products over all primes of Dirichlet series.
\end{proposition}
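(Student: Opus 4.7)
The strategy is to reduce the Euler decomposition to multiplicativity of the counting function $\comGrowth_n := \comGrowth_n(G(\Z), G(\R))$: namely $\comGrowth_{mn} = \comGrowth_m \comGrowth_n$ whenever $\gcd(m,n) = 1$. This is equivalent to constructing, for each $\Delta \in \CL(G)$, a canonical tuple $(\Delta^{(p)})_p$ of lattices in $\CPp(G)$ with $\comIndex(G(\Z), \Delta^{(p)})$ equal to the $p$-part of $\comIndex(G(\Z), \Delta)$, with $\Delta^{(p)} = G(\Z)$ for all but finitely many $p$, and then showing that $\Delta \mapsto (\Delta^{(p)})_p$ is a bijection onto the set of all such compatible tuples.

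For the construction, set $\Gamma := G(\Z) \cap \Delta$ and $H := \langle G(\Z), \Delta \rangle$. Since $G$ is unipotent and every element of $\Delta$ has a power lying in $\Gamma \subseteq G(\Z)$, unique root extraction in $G(\R)$ forces $\Delta \subseteq G(\Q)$; hence $H$ is a finitely generated nilpotent subgroup of $G(\Q)$ with finite index over $G(\Z)$, and $\Gamma$ has finite index in $H$. Choose a normal subgroup $K \trianglelefteq H$ of finite index contained in $\Gamma$ (for instance, the core of $\Gamma$ in $H$). The finite nilpotent quotient $H/K$ decomposes canonically as a direct product $H/K = \prod_p S_p$ of its Sylow $p$-subgroups, and every subgroup $A \leq H/K$ inherits the decomposition $A = \prod_p (A \cap S_p)$. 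Writing $\pi : H \to H/K$ for the projection, I define $\Delta^{(p)}$ as the preimage under $\pi$ of $(\pi(\Delta) \cap S_p) \cdot \prod_{q \neq p} (\pi(G(\Z)) \cap S_q)$. Since $K \leq \Gamma$, the identity $\pi(G(\Z) \cap \Delta^{(p)}) = \pi(G(\Z)) \cap \pi(\Delta^{(p)})$ holds, and a direct cardinality computation in the finite quotient shows that both $[G(\Z) : G(\Z) \cap \Delta^{(p)}]$ and $[\Delta^{(p)} : G(\Z) \cap \Delta^{(p)}]$ are $p$-powers whose product is the $p$-part of $\comIndex(G(\Z), \Delta)$.

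The main obstacle is verifying that this assignment is well-defined independently of $K$ and that it is indeed a bijection onto compatible tuples. Independence of $K$ should follow because any two admissible choices admit a common refinement and Sylow decompositions are natural with respect to the induced quotient maps; intrinsically, $\Delta^{(p)}$ corresponds in the profinite completion $\widehat{H} = \prod_p \widehat{H}_p$ to the open subgroup $\overline{\Delta}_p \times \prod_{q \neq p} \overline{G(\Z)}_q$, which visibly does not refer to $K$. To invert: given a compatible tuple $(\Delta^{(p)})_p$, form $H' = \langle G(\Z), \Delta^{(p_1)}, \ldots, \Delta^{(p_k)} \rangle$ over the finitely many nontrivial factors together with a common normal $K' \trianglelefteq H'$ inside $\bigcap_p (G(\Z) \cap \Delta^{(p)})$; reassembling the Sylow components of the $\pi'(\Delta^{(p)})$ in $H'/K'$ via direct product produces a unique $\Delta \in \CL(G)$. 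The hypothesis $G \in \nilA$ is essential throughout, since the canonical Sylow direct-product decomposition of finite nilpotent groups is precisely the mechanism that powers this local-to-global reconstruction, playing here the role that the pro-$p$ decomposition of the profinite completion plays in the subgroup-growth framework of Grunewald--Segal--Smith.
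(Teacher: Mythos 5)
Your proof is correct in spirit and rests on the same engine as the paper's: both arguments hinge on the canonical Sylow direct-product decomposition of a finite nilpotent quotient, which is exactly what unipotence of $G$ buys you. The organization, however, is genuinely different. The paper fixes $n$, forms $\Omega = \langle \CL_n(G)\rangle$ (finitely generated by Lemma~\ref{lem:roots}), takes $\Lambda_n$ to be the normal core of $\bigcap_{A\in\CL_n(G)}A$ in $\Omega$, and then works once and for all in the single finite nilpotent quotient $\Omega/\Lambda_n$, reading off the multiplicativity of $\comGrowth_n$ directly from the Sylow factorization there. You instead proceed lattice-by-lattice: for each $\Delta$ you choose a quotient $H/K$ adapted to that $\Delta$, extract local pieces $\Delta^{(p)}$, and then assert a bijection between arithmetic lattices and finitely-supported tuples $(\Delta^{(p)})_p \in \prod_p \CPp(G)$. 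Your construction of $\Delta^{(p)}$ and the index computation are sound, and your observation that $H' = \langle G(\Z), \Delta^{(p_1)}, \dots, \Delta^{(p_k)}\rangle$ equals $H$ is what makes the inverse construction live in the same ambient group. The tradeoff is that your version creates two extra obligations that the paper's ``one quotient per $n$'' device sidesteps: (i) independence of $\Delta^{(p)}$ from the choice of $K$, which you gesture at via the profinite/pro-$p$ decomposition $\widehat{H}=\prod_p\widehat{H}_p$ but do not verify matches the finite-quotient definition, and (ii) that the reconstruction really is a two-sided inverse (you need that the reassembled $\Delta$ returns the original tuple and that distinct tuples yield distinct $\Delta$), which you only sketch. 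Both issues are fixable along the lines you indicate, and your framing makes the combinatorial content of the Euler product (a bijection with finitely-supported tuples) more transparent; the paper's version is tighter precisely because by fixing one ambient finite quotient for each $n$ it never has to compare different quotients.
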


The main content of this article, our next result, is proved in \S \ref{subsection:maintheoremproof}:

\begin{theorem} \label{thm:main}
Let $G \in \nilA$, then the function $\zeta_{G,p}(s)$ is rational in $p^{-s}$, where the degrees of the numerators and denominators are bounded independently of the prime $p$.
\end{theorem}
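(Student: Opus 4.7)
My plan is to adapt the Grunewald--Segal--Smith template from \cite{MR943928} to the commensurability setting: express $\zeta_{G,p}(s)$ as a definable $p$-adic integral parametrized by matrices in $\GL_d(\Q_p)$, and then invoke Denef's rationality theorem together with its uniform-in-$p$ refinement to control the degrees.

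First, I would translate the problem into the Lie algebra $\lieg$ via the Mal'cev correspondence. Writing $L := \log \Gamma$ for the $\Z$-lattice in $\lieg_\Q$ corresponding to $\Gamma := G(\Z)$, subgroups $\Delta$ of $G(\R)$ commensurable with $\Gamma$ such that $\comIndex(\Gamma,\Delta)$ is a $p$-power should correspond bijectively, via passage to the $p$-adic completion, to $\Z_p$-Lie subalgebras $M$ of $\lieg_{\Q_p}$ that are commensurable with $L_p := L \otimes_\Z \Z_p$, with commensurability index preserved. Setting $d = \dim G$ and fixing a $\Z_p$-basis of $L_p$, every such $M$ has the form $M = g \cdot L_p$ for some $g \in \GL_d(\Q_p)$, unique modulo right multiplication by $\GL_d(\Z_p)$; Smith normal form yields $\comIndex(L_p, g L_p) = p^{|a_1| + \cdots + |a_d|}$, where $p^{a_1},\ldots,p^{a_d}$ are the elementary divisors of $g$.

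Next, the requirement that $g L_p$ be closed under the Lie bracket of $\lieg_{\Q_p}$ unpacks into a Boolean combination of polynomial equalities and valuation inequalities in the entries of $g$ and $g^{-1}$, and is therefore definable by a single first-order formula in Macintyre's language for $\Q_p$, uniformly in $p$. Choosing a uniformly definable fundamental domain $\mathcal F \subseteq \GL_d(\Q_p)$ for the right $\GL_d(\Z_p)$-action, I would rewrite
\[
\zeta_{G,p}(s) \;=\; \int_{X_p} p^{-s \cdot \nu(g)} \, d\mu(g),
\]
where $X_p \subseteq \mathcal F$ enforces the Lie-subalgebra condition, $\nu(g) = \sum_i |a_i|$, and $\mu$ is an appropriately normalized Haar measure --- all given by a single formula independent of $p$. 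Denef's rationality theorem for parametric $p$-adic integrals, as used in \cite{MR943928} and \cite{MR1217350}, then delivers rationality of $\zeta_{G,p}(s)$ in $p^{-s}$, and its uniform-in-$p$ refinement via Denef--Pas cell decomposition furnishes the bound on numerator and denominator degrees independent of $p$.

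The principal obstacle I anticipate is the noncompact range of the parametrization: unlike the original GSS subgroup count, in which $g$ may be restricted to $M_d(\Z_p) \cap \GL_d(\Q_p)$, here $g$ ranges over all of $\GL_d(\Q_p)$, and lattices ``above'' $L_p$ must be handled on equal footing with those ``below.'' Organizing a definable fundamental domain on which the Lie-bracket condition and the two-sided valuation $\nu(g)$ admit a simultaneous cell decomposition --- and making precise the bookkeeping that converts the two-sided commensurability index into a tractable integrand --- is the technical heart of the argument; once in place, convergence for $\Re(s)$ large is automatic since the contribution at each value of $\nu$ is supported on a set compact modulo $\GL_d(\Z_p)$.
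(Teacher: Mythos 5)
Your approach follows the broad GSS blueprint that the paper also uses---pass to a local $p$-adic integral and appeal to rationality of definable integrals---but there are two genuine gaps in the proposal as written.

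The first is the Lie-algebra correspondence. You assert that lattices $\Delta$ with $p$-power commensurability index correspond bijectively, via $p$-adic completion and $\log$, to $\Z_p$-Lie subalgebras of $\lieg_{\Q_p}$ commensurable with $L_p$. This is false for small primes: the Baker--Campbell--Hausdorff series carries denominators, so $\log$ of a closed subgroup of $G(\Q_p)$ need not be closed under the Lie bracket (nor under $\Z_p$-addition), and conversely $\exp$ of a subalgebra need not be a subgroup. The correspondence is reliable only for $p$ large relative to the nilpotence class; this is precisely why GSS establish their group/Lie-algebra comparison only for almost all primes. Since the theorem requires a degree bound that is uniform over \emph{all} primes, you cannot just discard the small ones---you would have to prove rationality at those primes by a separate argument and verify that the resulting degrees stay within the claimed bound. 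The paper avoids this entirely by staying on the group side: a lattice $\Delta$ is parametrized by a pair of lower-triangular matrices $(A,B)$ whose rows encode good bases in Mal'cev coordinates for $\overline\Delta$ and for $\overline\Delta\cap G(\Z_p)$, and the commensurability index is computed in closed form as $|\det(B)^2\det(A)^{-1}|_p^{-1}$. The defining conditions are first-order formulas built from the Mal'cev polynomials $\lambda$, $\mu^{(i)}$, $\kappa$, with no Lie algebra and no constraint on $p$.

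The second gap is the unbounded domain. You correctly flag that, unlike the GSS subgroup count, the parameter cannot be confined to $M_d(\Z_p)$, and you name organizing a definable fundamental domain for $\GL_d(\Q_p)/\GL_d(\Z_p)$ as ``the technical heart''---but you do not carry it out, and the tools you cite do not obviously cover it. Denef's original rationality theorem treats integrals over bounded definable sets, and it is not clear that a fundamental domain restricted to the Lie-subalgebra locus can be arranged to make the integral bounded while keeping everything uniformly $L_p$-definable. The paper sidesteps the fundamental-domain question entirely by invoking Macintyre's stronger rationality theorem (Theorem 22 of the cited reference), which directly handles $L_p$-definable integrands over unbounded $L_p$-definable sets with uniform-in-$p$ degree bounds; the domain $\setU_p\subset T_n(\Q_p)\times T_n(\Z_p)$ is integrated over as is, with no quotient by $\GL_d(\Z_p)$.

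A smaller but real issue: you say convergence for large $\Re(s)$ is ``automatic'' because each $\nu$-level is compact modulo $\GL_d(\Z_p)$. Level-wise compactness does not give convergence of the Dirichlet series; you must also bound how the contributions grow with $\nu$. The paper proves convergence separately, by showing every $\Delta\in\CL_{p^k}(G)$ sits inside $\Gamma_k := G(\Z)^{p^{-k}}$ with index at most $p^{(1+D)k}$ for a constant $D$ depending only on $G$, then bounding $\comGrowth_{p^k}$ by the subgroup growth of a free nilpotent group.
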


Proposition \ref{prop:eulerd} and Theorem \ref{thm:main} provide regularity results for the set of arithmetic lattices, $\CL(G)$.

\begin{corollary}
Let $G \in \nilA$, then
\begin{enumerate}
\item $\comGrowth_n(\CL(G)) = \prod \comGrowth_{p_i^{e_i}}(\CL(G))$ where $n = \prod p_i^{e_i}$ is the prime factorization of $n$;
\item there exists positive integers $l$ and $k$ such that, for each
  prime $p$, the sequence $\left( \comGrowth_{p^i}(\CL(G)) \right)_{i > l}$ satisfies a linear recurrence relation over $\Z$ of length at most $k$.
\end{enumerate}
\end{corollary}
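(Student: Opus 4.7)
The plan is to deduce both items of the corollary formally from Proposition \ref{prop:eulerd} and Theorem \ref{thm:main}; no new geometric input is needed.

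For item (1), first I would unpack each local factor. Since any $\Delta \in \CPp(G)$ satisfies $\comIndex(G(\Z),\Delta)$ equal to a power of $p$, we have
\[
\zeta_{G,p}(s) = \sum_{i \geq 0} \comGrowth_{p^i}(\CL(G)) \, p^{-is}.
\]
Expanding $\prod_p \zeta_{G,p}(s)$ as a formal Dirichlet series and collecting the coefficient of $n^{-s}$ for $n = \prod p_i^{e_i}$, exactly one term is selected from each local factor, namely the one indexed by $p_i^{e_i}$. Comparing with $\zeta_G(s) = \sum_n \comGrowth_n(\CL(G)) n^{-s}$ and invoking Proposition \ref{prop:eulerd} yields the multiplicativity identity.

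For item (2), by Theorem \ref{thm:main} we may write $\zeta_{G,p}(s) = N_p(p^{-s})/D_p(p^{-s})$ with $\deg N_p$ and $\deg D_p$ bounded by a constant independent of $p$. Let $k$ be the uniform bound on $\deg D_p$ and $l$ the uniform bound on $\deg N_p$. Substituting $x = p^{-s}$, the sequence $\comGrowth_{p^i}(\CL(G))$ consists of the Taylor coefficients at $0$ of the rational function $N_p(x)/D_p(x)$. After normalizing $D_p(x) = 1 + c_{p,1} x + \cdots + c_{p,k} x^k$, the standard theory of rational generating functions yields the linear recurrence
\[
\comGrowth_{p^{i}}(\CL(G)) + c_{p,1} \comGrowth_{p^{i-1}}(\CL(G)) + \cdots + c_{p,k} \comGrowth_{p^{i-k}}(\CL(G)) = 0
\]
for all $i > l$.

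The only delicate point is that the recurrence must be over $\Z$, which requires the $c_{p,j}$ to be integers. In the Grunewald--Segal--Smith framework the local denominators arising from $p$-adic integration are canonically products of factors of the form $1 - p^{a - b s}$ for fixed integers $a, b$, and these are visibly integer polynomials in $p^{-s}$. I therefore expect the hard part to be checking that the proof of Theorem \ref{thm:main} indeed produces $D_p$ of this canonical shape, so that the integrality of the coefficients (and hence the recurrence over $\Z$) is preserved, rather than any further analysis of the sequence itself.
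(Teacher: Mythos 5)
Your approach for part (1) is correct and is the intended one: the Euler decomposition of Proposition~\ref{prop:eulerd} immediately gives multiplicativity of the coefficients, exactly as you argue. For part (2) your outline is right and matches the way the corollary is meant to follow from Theorem~\ref{thm:main}, but two points deserve attention.

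First, and more substantively, your handling of integrality is unnecessarily roundabout and, as stated, leaves a gap. You propose to trace through the proof of Theorem~\ref{thm:main} to verify that Macintyre's rationality theorem produces denominators of the specific shape $\prod(1 - p^{a - bs})$. That is not needed. Since each $\comGrowth_{p^i}(\CL(G))$ is a nonnegative integer (finite by Lemma~\ref{lem:finite}) and Theorem~\ref{thm:main} says $\sum_i \comGrowth_{p^i}(\CL(G))\, t^i$ is a rational function of $t = p^{-s}$, Fatou's lemma on power series applies: a rational power series with integer Taylor coefficients, written in lowest terms $\tilde N_p/\tilde D_p$ with $\tilde D_p(0)=1$, automatically has $\tilde N_p, \tilde D_p \in \Z[t]$. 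So integrality of the recurrence coefficients is for free, with no appeal to the internal structure of the $p$-adic integral; moreover reducing to lowest terms can only decrease degrees, so the uniform degree bound of Theorem~\ref{thm:main} is preserved.

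Second, a minor bookkeeping issue: you set $l$ equal to the uniform bound on $\deg N_p$, but the recurrence $a_i + c_{p,1} a_{i-1} + \cdots + c_{p,k} a_{i-k} = 0$ only makes sense (and only follows from $[t^i]N_p = [t^i](D_p \cdot \sum a_j t^j)$) once $i \geq k$. So $l$ should be taken to be $\max(\text{bound on }\deg \tilde N_p,\ k-1)$ rather than the numerator-degree bound alone. With these two adjustments your argument is complete.
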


Our proof of Theorem \ref{thm:main} uses \cite[Theorem 22]{MR1076249}, which is a stronger rationality theorem than that used in \cite{MR943928}. Our setup requires such a theorem since we express the local commensurability zeta function as a $p$-adic integral over an unbounded set (see Proposition \ref{prop:zetaintegral}). To find this explicit formula, we constructed a parametrization of lattices that gives a closed form for the commensurability index function (see Lemma \ref{lem:comindex}). We also discovered an explicit correspondence 
$$
\{\text{closed }K \leq G(\Q_p) : \comIndex(G(\Z_p),K) = p^k \} \leftrightarrow
\{ \Delta \leq G(\Q) : \comIndex(G(\Z), \Delta) = p^k \},
$$
that may be of independent interest (see Lemma
\ref{lem:correspondence}). 

We were moved to pursue this subject after reading the work of
N.~Avni, S.~Lim, and E.~Nevo \cite{MR2897732} on the related concept
of {\em commensurator growth}.
Commensurator growth in \cite{MR2897732}, subgroup growth in
\cite{MR943928}, and commensurability growth in this paper are all
studied through their associated zeta functions.
Associating zeta functions to growth functions in groups is an active
area of research, centering around subgroup growth and representation
growth. For background reading on these subjects we recommend the
references \cite{MR1978431, MR2807857, MR2807855}.

\subsubsection*{\emph{Notation}}

$\comIndex(A,B) = [A : A \cap B][B: A \cap B]$.\\

\noindent
$\nilA = $ the class of unipotent algebraic $\Z$-groups.\\
$\CT = $ the class of torsion-free finitely generated nilpotent groups.\\

\noindent
$|S| =$ the cardinality of the set $S$.\\
$[x,y] = x^{-1} y^{-1} x y = x^{-1} x^y$. \\
$G^n = \left< g^n : g \in G \right>$ for $n \in \Z$. \\
$Z(G) = $ the center of $G$.\\

\noindent
$T_n(R) = $ set of lower-triangular $n\times n$ matrices over a
commutative ring $R$. \\

\noindent
$\zeta(s)$ is the Riemann zeta function.

\paragraph{Acknowledgements} We are grateful to Benson Farb, Tasho
Kaletha, Michael Larsen, and Andrew Putman for their conversations and
support. In particular, Benson Farb provided helpful comments on an early draft.

\section{\emph{The 1-dimensional case}} \label{sec:additivecase}

We begin with the integers. In this case we can directly relate the commensurability zeta function with the classical zeta function.

\begin{proposition}
Let $G = G_a$, the additive algebraic group, so that $G(\Z) = \Z$ and $G(\R) = \R$. Then formally,
$$
\zeta_G(s) = \frac{\zeta^2(s)}{\zeta(2s)}.
$$
\end{proposition}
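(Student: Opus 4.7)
The plan is to classify the arithmetic lattices in $G_a(\mathbb{R})=\mathbb{R}$ explicitly, compute the commensurability index in closed form, and then recognize the resulting Dirichlet series.

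First I would show that every arithmetic lattice $\Delta \in \mathcal{L}(G_a)$ has the form $\Delta = \tfrac{a}{b}\mathbb{Z}$ for a unique pair of coprime positive integers $a,b$. Indeed, commensurability with $\mathbb{Z}$ forces $\Delta$ to contain $n\mathbb{Z}$ for some $n$, so $\Delta$ is a discrete subgroup of $\mathbb{R}$, hence cyclic: $\Delta = r\mathbb{Z}$ for some $r > 0$. The other half of commensurability gives $m\Delta \subseteq \mathbb{Z}$ for some $m$, so $r \in \mathbb{Q}_{>0}$, and we can write $r = a/b$ uniquely in lowest terms. This gives a bijection between $\mathcal{L}(G_a)$ and the set of coprime pairs $(a,b) \in \mathbb{Z}_{>0}^2$.

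Next I would compute the index in closed form. For $\gcd(a,b)=1$, a short calculation using that $\frac{ak}{b} \in \mathbb{Z}$ requires $b \mid k$ shows
\[
\mathbb{Z} \cap \tfrac{a}{b}\mathbb{Z} = a\mathbb{Z},
\]
so $[\mathbb{Z} : \mathbb{Z} \cap \tfrac{a}{b}\mathbb{Z}] = a$ and $[\tfrac{a}{b}\mathbb{Z} : \mathbb{Z} \cap \tfrac{a}{b}\mathbb{Z}] = b$, giving $\comIndex(\mathbb{Z}, \tfrac{a}{b}\mathbb{Z}) = ab$.

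Combining the previous two steps yields
\[
\zeta_G(s) \;=\; \sum_{\substack{a,b \geq 1 \\ \gcd(a,b) = 1}} (ab)^{-s}.
\]
Finally, I would identify this with $\zeta^2(s)/\zeta(2s)$ via an Euler product computation. For each prime $p$, the coprime pairs $(a,b)$ with $ab$ a power of $p$ are $(1,1)$ together with $(p^k,1)$ and $(1,p^k)$ for $k \geq 1$, contributing the local factor
\[
1 + 2\sum_{k \geq 1} p^{-ks} \;=\; \frac{1 + p^{-s}}{1 - p^{-s}} \;=\; \frac{1 - p^{-2s}}{(1 - p^{-s})^2}.
\]
Taking the product over all primes gives $\zeta^2(s)/\zeta(2s)$, as desired. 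There is no real obstacle here — the only thing to be careful about is the uniqueness of the parametrization $r = a/b$, which is what makes the local factor the nice rational expression above rather than a double-counted version.
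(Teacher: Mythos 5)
Your proof is correct and follows essentially the same route as the paper: classify lattices as $\frac{a}{b}\mathbb{Z}$ with $\gcd(a,b)=1$, compute $\comIndex(\mathbb{Z},\frac{a}{b}\mathbb{Z})=ab$, and evaluate the resulting Dirichlet series by an Euler product with local factor $(1-p^{-2s})/(1-p^{-s})^2$. One small logical slip: containing $n\mathbb{Z}$ does not by itself make $\Delta$ discrete — it is the other inclusion, $m\Delta \subseteq \mathbb{Z}$, hence $\Delta \subseteq \frac{1}{m}\mathbb{Z}$, that gives discreteness; you state that fact a sentence later, so the content is all present and just needs reordering.
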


\begin{proof}
The subgroups of $\R$ commensurable with $\Z$ are all of the form  $r \Z$ where $r \in \Q^*$.
Writing $r = n/d$ in reduced form, we have
$\comIndex(\Z, r \Z) = n d.$
Hence,
$$
\comGrowth_n(\CL(G)) = |\{ a/b : a, b \in \N, (a,b) = 1, ab = n \} |.
$$
From this, we get for distinct primes $p_1, p_2, p_3, \ldots, p_n$,
$$
\comGrowth_{p_1^{j_1} p_2^{j_2} \cdots p_n^{j_n}}(\CL(G)) = \comGrowth_{p_1^{j_1}}(\CL(G)) \comGrowth_{p_2^{j_2}}(\CL(G)) \cdots \comGrowth_{p_n^{j_n}}(\CL(G)).
$$
And for any prime $p$,
$$
\comGrowth_{p^k} (\CL(G)) = 2.
$$
Hence,
$$
\comGrowth_n(\CL(G)) = 2^{\omega_n},
$$
where $\omega_n$ is the number of distinct primes dividing $n$.
Following \cite[p. 255]{MR2445243}, we compute:
\begin{eqnarray*}
\zeta_G(s) &=& \sum_{n=1}^\infty 2^{\omega_n} n^{-s} \\
&=& \prod_{p \text{ prime}} \left( 1 + \sum_{k=1}^\infty \frac{2}{p^{ks}} \right) \\
&=& \prod_{p \text{ prime}} \left( \frac{2p^{-s}}{1-p^{-s}} +1 \right) \\
&=& \prod_{p \text{ prime}} \frac{1-p^{-2s}}{(1-p^{-s})^2}\\
&=& \frac{\zeta^2(s)}{\zeta(2s)}.
\end{eqnarray*}
\end{proof}

\section{Unipotent algebraic groups} \label{sec:unipotentgroups}

To any $G \in \nilA$ we fix an embedding of $G(\R)$ into a group $T_n(\R)$ such that $G(\Q) = T_n(\Q) \cap G(\Q)$ given by Kolchin's Theorem \cite{MR0024884}.
With this in hand, we first show that for any unipotent algebraic $\Z$-group $G$, the commensurability growth function takes values in $\N$:

\begin{lemma} \label{lem:finite}
Let $G$ be in $\nilA$.
Then $\CL_n(G)$ is finite for any $n \in \N$.
\end{lemma}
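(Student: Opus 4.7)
The plan is to embed every $\Delta \in \CL_n(G)$ into a single finitely generated overgroup of $G(\Z)$ inside $G(\Q)$, and then invoke the classical fact that a finitely generated group has only finitely many subgroups of each finite index.

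First I would show $\Delta \subseteq G(\Q)$. Setting $\Gamma = \Delta \cap G(\Z)$ gives $[\Delta:\Gamma] \leq n$, so the action of $\Delta$ on $\Delta/\Gamma$ by left translation produces, for each $\delta \in \Delta$, an integer $k \leq n$ with $\delta^k \in \Gamma$, hence $\delta^{n!} \in G(\Z)$. Because $G$ is unipotent, the $n!$-th power map is a $\Q$-isomorphism of algebraic varieties: in Mal'cev coordinates (equivalently via $\log$ and $\exp$ in the Kolchin embedding) it takes the triangular polynomial form
\[
(a_1,\dots,a_d) \longmapsto \bigl(n!\,a_i + R_i(a_1,\dots,a_{i-1})\bigr)_{i=1}^{d},\qquad R_i \in \Q[x_1,\dots,x_{i-1}],
\]
which one inverts recursively by a $\Q$-polynomial formula. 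Thus $\delta$ is determined by $\delta^{n!}$ and lies in $G(\Q)$, so $\Delta \subseteq I_{n!} := \{g \in G(\Q) : g^{n!} \in G(\Z)\}$.

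Next I would build a finitely generated group $\tilde H_n \leq G(\Q)$ containing $I_{n!}$. Reading off the inverse of the triangular map, each coordinate $a_i$ of any $g \in I_{n!}$ must lie in $\tfrac{1}{N_i}\Z$ for explicit positive integers $N_i$ depending only on $n$ and $G$. Take $\tilde H_n := \langle g_1^{1/N_1}, \dots, g_d^{1/N_d}\rangle$. Writing $a_i = c_i/N_i$ with $c_i \in \Z$ gives $g = (g_1^{1/N_1})^{c_1} \cdots (g_d^{1/N_d})^{c_d} \in \tilde H_n$, so $I_{n!} \subseteq \tilde H_n$. Since $\tilde H_n$ is finitely generated, contains $G(\Z)$, and lies in the Mal'cev completion $G(\Q)$ of $G(\Z)$, it has the same Hirsch length as $G(\Z)$, whence $[\tilde H_n : G(\Z)] < \infty$.

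Finally, each $\Delta \in \CL_n(G)$ now sits inside $\tilde H_n$ with
\[
[\tilde H_n : \Delta] \;=\; \frac{[\tilde H_n : \Gamma]}{[\Delta : \Gamma]} \;\leq\; n \cdot [\tilde H_n : G(\Z)],
\]
a bound depending only on $n$ and $G$, so finiteness of $\CL_n(G)$ follows because a finitely generated group has only finitely many subgroups of each given finite index. The main obstacle is the bookkeeping in the second step: verifying that $I_{n!} \subseteq \tilde H_n$ and that $[\tilde H_n : G(\Z)] < \infty$ relies on the polynomial structure of multiplication and of the $n!$-th power map in Mal'cev coordinates, together with the standard fact that a subgroup of a torsion-free nilpotent group has finite index exactly when it has the same Hirsch length as the ambient group.
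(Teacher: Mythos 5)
Your proof is correct and follows the same overall strategy as the paper: embed every $\Delta \in \CL_n(G)$ into a single finitely generated overgroup of $G(\Z)$ of index bounded in terms of $n$ alone, then invoke the fact that a finitely generated group has only finitely many subgroups of any given finite index. Where you differ is in how the overgroup is produced. The paper simply cites \cite[Exercise 7, page 114]{MR713786} for the fact that the subgroup of $G(\Q)$ generated by all $n$-th roots of elements of $G(\Z)$ is finitely generated; you reprove this from scratch by inverting the triangular polynomial form of the power map in Mal'cev coordinates to extract explicit denominators $N_i$, and then taking $\tilde{H}_n = \langle g_1^{1/N_1}, \dots, g_d^{1/N_d} \rangle$. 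Your version is self-contained and makes the dependence on $n$ and $G$ explicit, at the cost of some bookkeeping; the paper's is shorter, deferring the technical work to the reference. One place where you are actually more careful than the paper: you pass to $\delta^{n!} \in G(\Z)$ rather than $\delta^{n}$. Since $\Delta \cap G(\Z)$ need not be normal in $\Delta$, one only knows that the least $k$ with $\delta^k \in \Delta \cap G(\Z)$ is \emph{at most} $[\Delta : \Delta \cap G(\Z)] \leq n$, not that it divides $n$; taking $n!$ sidesteps that gap, and the rest of the argument is unaffected.
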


\begin{proof}
Given $n \in \N$, for any $\Delta \in \CL_n(G)$ and for every $h \in \Delta$, we have $h^n \in G(\Z)$.
Then by \cite[Exercise 7, page 114]{MR713786}, there exists a finitely generated subgroup $\Gamma \leq G(\Q)$ that contains every $n$th root of an element of $G$.
Then every element in $\CL_n(G)$ is a subgroup of $\Gamma$ of index at most $n[\Gamma: G(\Z)]$.
Since $\Gamma$ is finitely generated, it has finitely many subgroups of any given index, so $\CL_n(G)$ is finite.
\end{proof}

As an immediate consequence of the proof of Lemma \ref{lem:finite}, we get the well-known fact that arithmetic lattices in $G$ are contained in $G(\Q)$:

\begin{lemma} \label{lem:lattices}
Let $G$ be in $\nilA$.
Then each element of 
$\mathcal{L}(G)$ is a subgroup of $G(\Q)$. \qed
\end{lemma}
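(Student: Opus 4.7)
The plan is to extract the result directly from what was already established en route to proving Lemma \ref{lem:finite}. The key observation made there is that any $\Delta \in \CL_n(G)$ has the property that every $h \in \Delta$ satisfies $h^n \in G(\Z)$: indeed, if $m := [\Delta : G(\Z) \cap \Delta]$, then $m$ divides $n$ and $h^m \in G(\Z) \cap \Delta$, so a fortiori $h^n \in G(\Z)$.

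Next, I would invoke the same fact from \cite[Exercise 7, page 114]{MR713786} used in Lemma \ref{lem:finite}: there is a finitely generated subgroup $\Gamma \leq G(\Q)$ containing every $n$-th root of every element of $G(\Z)$. (This is essentially the statement that the isolator in $G(\Q)$ of a finitely generated subgroup of a torsion-free finitely generated nilpotent group is itself finitely generated, and sits inside $G(\Q)$.) In particular, every $n$-th root in $G(\R)$ of an element of $G(\Z)$ automatically lies in $G(\Q)$, because roots are unique in the unipotent group $G(\R)$.

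Combining these two steps, for an arbitrary $\Delta \in \CL(G)$ we may choose $n \in \N$ with $\Delta \in \CL_n(G)$, and then every $h \in \Delta$ is an $n$-th root in $G(\R)$ of the element $h^n \in G(\Z)$, so $h \in \Gamma \subseteq G(\Q)$. Hence $\Delta \subseteq G(\Q)$, completing the proof. There is no real obstacle here beyond recognizing that the two ingredients already appearing inside the proof of Lemma \ref{lem:finite} yield the conclusion verbatim; the statement is isolated as a separate lemma only because it will be cited repeatedly in the sequel.
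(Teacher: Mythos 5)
Your proposal is correct and follows exactly the route the paper intends: the paper states Lemma~\ref{lem:lattices} as an immediate consequence of the proof of Lemma~\ref{lem:finite} (with no separate argument given), and you simply extract the fact that every $\Delta \in \CL_n(G)$ sits inside the finitely generated group $\Gamma \leq G(\Q)$ of $n$-th roots produced there. Your added remarks that $m = [\Delta : G(\Z) \cap \Delta]$ divides $n$ with $h^m \in G(\Z)\cap\Delta$, and that uniqueness of roots in the unipotent group $G(\R)$ forces $n$-th roots to already lie in $G(\Q)$, make explicit two small points the paper leaves implicit but do not change the argument.
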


Slight modifications of a technical idea in the proof of Lemma
\ref{lem:finite} will be used repeatedly throughout the rest of the paper, so we encapsulate them here.
Throughout, for any $\Delta \leq G(\R)$ and $n \in \N$, we write $\Delta^{1/n}$ to be
the group generated by elements $g \in G(\R)$ such that $g^n \in \Delta$.

\begin{lemma} \label{lem:roots}
Let $G$ be in $\nilA$ and $\Gamma = G(\Z)$ have nilpotence class $c$ and Hirsch length $d$.
\begin{enumerate}
\item \label{item:rootsa}
For any $k \in \N$, $[\Gamma^{1/k} : \Gamma]$ is finite. 

\item \label{item:rootsb}
We have $(\Gamma^{p^{-k}})^{p^{k c(c+1)/2}} \leq \Gamma$. 

\item \label{item:rootsc}
For any prime $p$ and any $k \in \N$, we have $[\Gamma^{p^{-k}}:\Gamma] \leq p^{dk c(c+1)/2}.$

\end{enumerate}
\end{lemma}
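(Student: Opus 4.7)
The unifying strategy is to work in a Malcev basis of $\Gamma$ adapted to the lower central series $\Gamma=\gamma_1(\Gamma)\supseteq\gamma_2(\Gamma)\supseteq\cdots\supseteq\gamma_c(\Gamma)\supseteq\gamma_{c+1}(\Gamma)=1$. Fix such a basis $u_1,\ldots,u_d$ with each $u_i\in\gamma_{w_i}(\Gamma)\setminus\gamma_{w_i+1}(\Gamma)$ of weight $w_i\in\{1,\ldots,c\}$. Every element of $G(\R)$ has a unique normal form $g=u_1^{a_1}\cdots u_d^{a_d}$ with $a_i\in\R$, and $\Gamma$ consists of those $g$ with all $a_i\in\Z$. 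The key computational input is the standard fact that in these coordinates the $u_i$-coordinate of $g^n$ is a rational polynomial in $n$ and $a_1,\ldots,a_d$, of degree at most $w_i$ in $n$.

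For part~\ref{item:rootsa}, the plan is to induct on the nilpotence class. In the abelian base case, $g^k\in\Gamma\cong\Z^d$ forces each coordinate of $g$ into $k^{-1}\Z$, so $[\Gamma^{1/k}:\Gamma]\leq k^d$. For the inductive step, reduce modulo $Z(\Gamma)$ and apply the inductive hypothesis, with the central contribution handled by the abelian case. Alternatively, part~\ref{item:rootsa} is subsumed by part~\ref{item:rootsc} once the latter is established.

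Part~\ref{item:rootsb} is the technical core. Writing $N=\Gamma^{p^{-k}}$, I would prove by telescoping induction using the Hall--Petresco collection formula that for each $1\leq i\leq c$ and every $g\in N$,
\[
g^{p^{k(1+2+\cdots+i)}}\in\Gamma\cdot\gamma_{i+1}(N).
\]
The base $i=1$ is Hall--Petresco modulo $\gamma_2(N)$: if $g=h_1\cdots h_r$ with each generator $h_j$ satisfying $h_j^{p^k}\in\Gamma$, then $g^{p^k}\equiv h_1^{p^k}\cdots h_r^{p^k}\pmod{\gamma_2(N)}$. For the inductive step, write $g^{p^{ki(i+1)/2}}=\gamma z$ with $\gamma\in\Gamma$ and $z\in\gamma_{i+1}(N)$; raising to $p^{k(i+1)}$ and using $[\gamma,z]\in\gamma_{i+2}(N)$ yields
\[
g^{p^{k(i+1)(i+2)/2}}\equiv\gamma^{p^{k(i+1)}}\cdot z^{p^{k(i+1)}}\pmod{\gamma_{i+2}(N)}.
\]
Here $\gamma^{p^{k(i+1)}}\in\Gamma$, and $z^{p^{k(i+1)}}\in\Gamma\cdot\gamma_{i+2}(N)$ because each weight-$(i+1)$ commutator of generators satisfies, modulo $\gamma_{i+2}$, the iterated identity $[h_0,\ldots,h_i]^{p^{k(i+1)}}\equiv[h_0^{p^k},\ldots,h_i^{p^k}]$, which already lies in $\gamma_{i+1}(\Gamma)\subseteq\Gamma$. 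Setting $i=c$ and using $\gamma_{c+1}(N)=1$ delivers part~\ref{item:rootsb} with the precise exponent $k(1+2+\cdots+c)=kc(c+1)/2$.

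Part~\ref{item:rootsc} then follows by combining part~\ref{item:rootsb} with a Malcev-coordinate count: since every $g\in\Gamma^{p^{-k}}$ satisfies $g^{p^{kc(c+1)/2}}\in\Gamma$, unravelling the polynomial coordinate formulas above inductively places each $a_i$ in $p^{-kc(c+1)/2}\Z$ modulo $\Z$, yielding at most $p^{kc(c+1)/2}$ cosets per coordinate and hence $[\Gamma^{p^{-k}}:\Gamma]\leq p^{dkc(c+1)/2}$. The principal obstacle will be part~\ref{item:rootsb}: the commutator-distribution identity must be verified carefully at each level, and the $p$-adic valuations of the Hall--Petresco binomial corrections $\binom{p^{kj}}{m}$ for $m\leq c$ must be shown to be absorbed by the telescoping exponent budget $k+2k+\cdots+ck$.
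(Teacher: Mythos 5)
Your route differs from the paper's: where the paper simply cites Segal's \emph{Polycyclic Groups} (Exercise~7, p.~114 for item~(1); Proposition~3, p.~113 for item~(2)) and then, for item~(3), applies the standard bound $[H:H^m]\le m^{h(H)}$ to the torsion-free finitely generated nilpotent group $H=\Gamma^{p^{-k}}$ of Hirsch length $h(H)\le d$, you reprove items (1) and (2) from scratch. Your telescoping argument for item~(2) is correct and actually clarifies where the exponent $kc(c+1)/2$ comes from; the worry you raise at the end about $p$-adic valuations of Hall--Petresco binomial corrections is a non-issue, because at each stage you work modulo $\gamma_{i+2}(N)$, where $\gamma\in\Gamma$ and $z\in\gamma_{i+1}(N)$ genuinely commute, so no collection corrections ever appear.

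There is, however, a genuine gap in your item~(3). The claim that ``unravelling the polynomial coordinate formulas inductively places each $a_i$ in $p^{-kc(c+1)/2}\Z$'' does not hold in $\Gamma$-Mal'cev coordinates: the power polynomials $\lambda_i(\mathbf a,n)$ have rational, not integral, coefficients (e.g.\ $\binom{n}{2}a_1a_2$ in the Heisenberg case), so once $a_1\in p^{-N}\Z\setminus\Z$ the correction term $Q_i(a_1,\dots,a_{i-1},p^N)$ can introduce denominators beyond $p^N$, and the induction on coordinates does not close. Moreover the map $g\mapsto(a_i\bmod\Z)_i$ is not constant on right $\Gamma$-cosets (Mal'cev multiplication is not coordinate-wise addition), so ``at most $p^N$ cosets per coordinate'' does not directly count $[\Gamma^{p^{-k}}:\Gamma]$. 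The clean fix is the paper's: once item~(1) gives that $H:=\Gamma^{p^{-k}}$ is a finitely generated torsion-free nilpotent group, and $H\le G(\Q)$ forces $h(H)\le d$, the bound $[H:H^{p^N}]\le p^{Nd}$ follows by induction along a central series \emph{of $H$ itself} (quotienting by a central infinite-cyclic subgroup with torsion-free quotient), and item~(2) gives $H^{p^N}\le\Gamma$, so $[\Gamma^{p^{-k}}:\Gamma]\le[\ H:H^{p^N}]\le p^{dkc(c+1)/2}$.
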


\begin{proof}
Item \ref{item:rootsa} follows from \cite[Exercise 7, page 114]{MR713786}.
Item \ref{item:rootsb} follows from \cite[Proposition 3, pp. 113]{MR713786}.
For the final statement, by \cite[Exercise 7, page 114]{MR713786} the group $\Gamma^{p^{-k}}$ is a finitely generated nilpotent group.
Moreover, by being in $G(\Q)$, it has Hirsch length at most $d$, and hence
$$
[\Gamma^{p^{-k}} : (\Gamma^{p^{-k}})^{p^{k c(c+1)/2}} ] \leq p^{d k c(c+1)/2}.
$$
Then by Item \ref{item:rootsb} it follows that we have bounded $[\Gamma^{p^{-k}} : \Gamma]$ as desired.
\end{proof}

\subsection{The Euler decomposition} \label{subsection:eulerdecomp}

\begin{proposition} \label{prop:localtoglobalzeta}
If $G$ is in $\nilA$, then 
\begin{eqnarray*}
\zeta_G(s) &=& \prod_{p \text{ prime}} \zeta_{G,p}(s).
\end{eqnarray*}
\end{proposition}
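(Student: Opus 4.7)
The strategy is to prove that $n \mapsto \comGrowth_n(\CL(G))$ is a multiplicative arithmetic function. Granting this, the Euler product $\zeta_G(s) = \prod_p \zeta_{G,p}(s)$ follows from the same formal Dirichlet-series manipulation that completes the proof in \S\ref{sec:additivecase}. Concretely, it suffices to construct, for every coprime factorization $n = m\ell$, a natural bijection
\[
\CL_n(G) \;\longleftrightarrow\; \CL_m(G) \times \CL_\ell(G).
\]

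To construct this bijection, fix $\Delta \in \CL_n(G)$ and set $\Gamma := G(\Z)$, $H := \Gamma \cap \Delta$. By Lemma \ref{lem:roots}, both $\Gamma$ and $\Delta$ embed as finite-index subgroups of the common finitely generated nilpotent overgroup $M := \Gamma^{1/n}$. The finite quotients $\Gamma/H$ and $\Delta/H$ are nilpotent, hence decompose canonically as direct products of their Sylow subgroups; grouping primes into those dividing $m$ versus those dividing $\ell$ yields
\[
\Gamma/H \,=\, P_m \times P_\ell, \qquad \Delta/H \,=\, Q_m \times Q_\ell,
\]
with $|P_m|\cdot|Q_m| = m$ and $|P_\ell|\cdot|Q_\ell| = \ell$. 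Let $\Gamma_\ell \leq \Gamma$ be the preimage of $P_\ell$ and $\Delta_m \leq \Delta$ the preimage of $Q_m$; define $\Delta^{(m)} := \langle \Gamma_\ell, \Delta_m \rangle \leq M$, and analogously $\Delta^{(\ell)}$ by swapping the roles of $m$ and $\ell$. A direct computation, exploiting that $\Gamma_\ell \cap \Delta = H$ and that the Sylow decompositions above are canonical, shows that $\Gamma \cap \Delta^{(m)} = \Gamma_\ell$ and $\Delta^{(m)}/\Gamma_\ell \cong Q_m$, so $\comIndex(\Gamma, \Delta^{(m)}) = |P_m||Q_m| = m$ and $\Delta^{(m)} \in \CL_m(G)$; symmetrically for $\Delta^{(\ell)}$. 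This defines the forward assignment $\Delta \mapsto (\Delta^{(m)}, \Delta^{(\ell)})$.

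The main obstacle is inverting this map. Given $(K_1, K_2) \in \CL_m(G) \times \CL_\ell(G)$, one must produce a unique $\Delta \in \CL_n(G)$ mapping to it. The plan is to assemble $\Delta$ inside the common overgroup $\Gamma^{1/n}$ by combining the ``$m$-primary sector'' of $K_1$ with the ``$\ell$-primary sector'' of $K_2$. This is clean because $\gcd(m,\ell) = 1$ forces these sectors to correspond to disjoint sets of Sylow components of the relevant finite quotients, so the two pieces of local data do not interact; equivalently, one may argue prime by prime using the local-to-global correspondence advertised in Lemma \ref{lem:correspondence}. The nilpotence of $G$ is essential throughout: it is precisely what guarantees that the Sylow subgroups of each finite quotient $\Gamma/H$ and $\Delta/H$ are unique and normal, making the splittings $P_m \times P_\ell$ and $Q_m \times Q_\ell$ canonical and hence the whole assignment well-defined.
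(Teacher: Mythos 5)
Your plan rests on the same underlying fact that the paper uses: finite quotients of finitely generated nilpotent groups are nilpotent, hence split canonically into Sylow factors, and this forces $n \mapsto \comGrowth_n(\CL(G))$ to be multiplicative. The implementation, however, differs in a way worth noting. You work with the two quotients $\Gamma/H$ and $\Delta/H$ for each $\Delta$ separately and try to construct an explicit bijection $\CL_n(G) \leftrightarrow \CL_m(G) \times \CL_\ell(G)$ by splicing together ``$m$-primary'' and ``$\ell$-primary'' sectors inside $M = \Gamma^{1/n}$. The paper instead passes once and for all to a single finite nilpotent quotient $\Omega/\Lambda_n$, where $\Omega = \langle \CL_n(G)\rangle$ and $\Lambda_n$ is the normal core of $\bigcap_{A\in\CL_n(G)} A$; there, the images of $G(\Z)$ and of every $\Delta \in \CL_n(G)$ decompose simultaneously into Sylow components of the ambient group, and the multiplicativity of $\comGrowth_n$ is read off directly from $\comIndex(Q,A) = \prod_p \comIndex(A_p, Q_p)$. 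The single-quotient framing buys a uniform Sylow decomposition across all lattices at once, which is exactly what makes the counting argument clean without ever needing to invert a map.

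That said, your forward map $\Delta \mapsto (\Delta^{(m)},\Delta^{(\ell)})$ does work if one fills in the ``direct computation'' — passing to a finite nilpotent quotient $N$ of $\langle \Gamma,\Delta\rangle$, writing $\bar\Gamma = \prod_p \bar\Gamma_p$, $\bar\Delta = \prod_p \bar\Delta_p$, and checking that $\overline{\Delta^{(m)}}$ has $p$-part $\bar\Delta_p$ for $p\mid m$, $\bar\Gamma_p$ for $p\mid\ell$, and $\bar H_p$ otherwise, whence $\bar\Gamma \cap \overline{\Delta^{(m)}} = \bar\Gamma_\ell$ and $\comIndex(\Gamma,\Delta^{(m)})=m$. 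The genuine gap is in the inverse. Pointing to Lemma~\ref{lem:correspondence} doesn't suffice as stated: that lemma relates lattices in $\CL_{p^k}(G)$ to closed subgroups of $G(\Q_p)$ for a single fixed $p$, but you need a recipe for assembling one $\Delta \in \CL_n(G)$ from data at several primes and then showing it's the unique preimage under your forward map. To close this, either carry out the dual splice — form a finite nilpotent quotient of $\langle\Gamma,K_1,K_2\rangle$, define $\Delta$ as the preimage of $\prod_{p\mid m}(\bar K_1)_p \times \prod_{p\mid\ell}(\bar K_2)_p \times \prod_{p\nmid n}\bar\Gamma_p$, and verify it lands in $\CL_n(G)$ and inverts your map — or, more economically, adopt the paper's strategy of comparing all the images inside a single finite nilpotent quotient so that no explicit inverse is ever required.
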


\begin{proof}
Any group in $\CL_n(G)$ is a subgroup of $G(\Z)^{1/n}$.
Hence, by applying Item \ref{item:rootsa} of Lemma \ref{lem:roots}, we
see that $\Omega = \left< \CL_n(G) \right>$ is finitely generated and
contains each element of $\mathcal{L}_n(G)$ as a subgroup of finite index. Hence, $\Lambda_n' = \cap_{A \in \CL_n(G)} A$ is a subgroup of finite index in $\Omega$. Let $\Lambda_n$ be the normal core of $\Lambda_n'$ in $\Omega$.

Consider the group $\Gamma := \Omega/\Lambda_n$. It is a finite nilpotent group, and hence decomposes into a product of its Sylow $p$-subgroups:
$$
\Omega/\Lambda_n = \prod_p S_p.
$$
For any image $A$ in $\Gamma$ of an element $\Delta \in \mathcal{L}_n(G)$, we get the following decomposition:
$$
A = \prod_p A_p,
$$
where $A_p$ are the Sylow $p$-subgroups of $A$ and $A_p \leq S_p$ for every $p$.
A similar statement is true for the image $Q$ of $G(\Z)$ in $\Gamma$, we have 
$$
Q = \prod_{p \text{ prime}} Q_p
$$
where $Q_p \leq S_p$.
We compute,
$$
Q \cap A = \prod_{p  \text{ prime}} (A_p \cap Q_p).
$$
Hence, 
$$
\comIndex(G(\Z), \Delta) = \comIndex(Q, A) = \prod_{p  \text{ prime}} \comIndex(A_p, Q_p).
$$
Further, $\comIndex(A_p, Q_p)$ is the greatest power of $p$ that divides $n$.
Since any such element of $\CL_n(G)$ arises from such a decomposed $A$, we have
$$
\comGrowth_n(\mathcal{L}) = \prod_{p^k || n } \comGrowth_{p^k}(\mathcal{L}),
$$
and hence the Euler decomposition for the commensurability zeta function above holds.
\end{proof}

\subsection{$p$-adic formulation and the proof of Theorem \ref{thm:main}} \label{subsection:maintheoremproof}

Let $G \in \nilA$.
Fix a Mal'cev basis $(x_1, \ldots, x_n)$ for $G(\Z)$, so that 
\[
  1 < \left< x_1 \right> < \left< x_1, x_2 \right> < \dotsb < \left<
      x_1,\dotsc, x_n \right> = G(\Z)
\]
is a central series with infinite cyclic successive quotients. 
Elements in $G(\Q_p)$ may be identified with the set of all ``$p$-adic words'' of the form
$$
{\bf x} ({\bf a}) = x_1^{a_1} \cdots x_n^{a_n},
$$
where ${\bf a} \in \Q_p^n$. Note that ${\bf x}$ parametrizes $G(\Z)$
when restricted to ${\bf a} \in \Z^n$.  Let $\lambda$, $\kappa$, and $\mu^{(i)}$
for $i\geq 2$ be polynomials defined over $\Q$ as in
\cite[pp. 197]{MR943928} for the $\CT$ group $G(\Z)$, so that for
${\bf a}\in \Z^n$ we have
\begin{eqnarray*}
{\bf x}({\bf a})^k= {\bf x}(a_1, a_2, a_3, \ldots, a_n)^k &=& {\bf x}(\lambda({\bf a},k)), \\
{\bf x}({\bf a}_1) {\bf x}({\bf a}_2) \cdots x({\bf a}_i) &=& {\bf x}(\mu^{(i)}({\bf a}_1, {\bf a}_2, \ldots {\bf a}_i)),\\
\left[{\bf x}({\bf a}_1), {\bf x}({\bf a}_2)\right] &=& {\bf x}(\kappa({\bf a}_1, {\bf a}_2)).
\end{eqnarray*}

We denote the closure of a subset $S$ of $G(\Q_p)$ to be $\overline{S}$ or $S^-$. 
For any $\Gamma \in \CL(G)$ we have that $\overline{\Gamma}$ is the
pro-$p$ completion of $\Gamma$ \cite[Theorem 4.3.5]{MR1691054}.

\begin{lemma} \label{lem:correspondence}
  For each $k$, the closure map gives a one-to-one correspondence
  between elements of $\CL_{p^k}(G)$ and closed subgroups $H$ of
  $G(\Q_p)$ with $c(H, G(\Z_p)) = p^k$.
\end{lemma}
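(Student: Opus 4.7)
The plan is to reduce the claim to the classical correspondence between $p$-power finite-index subgroups of a finitely generated residually $p$-finite group and open subgroups of its pro-$p$ completion. Let $\Gamma = G(\Z)$ and $\Gamma_k := \Gamma^{p^{-k}}$. By Lemma~\ref{lem:roots}, $\Gamma_k$ is a finitely generated nilpotent group containing $\Gamma$ with $p$-power finite index; by \cite[Theorem 4.3.5]{MR1691054}, its closure $\overline{\Gamma_k}$ in $G(\Q_p)$ is precisely its pro-$p$ completion, and contains $\overline{\Gamma} = G(\Z_p)$ as an open subgroup of matching $p$-power index.

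First I would show that both collections in the claim sit inside this ambient pair $(\Gamma_k, \overline{\Gamma_k})$. For $\Delta \in \CL_{p^k}(G)$, set $p^a := [\Delta : \Delta \cap \Gamma]$, which divides $p^k$; then every $g \in \Delta$ satisfies $g^{p^a} \in \Gamma$, hence $g^{p^k} \in \Gamma$, so $g \in \Gamma_k$ and $\Delta \leq \Gamma_k$. An analogous argument using $h^{p^k} \in G(\Z_p)$ for $h \in H$ shows that every closed $H \leq G(\Q_p)$ with $c(G(\Z_p), H) = p^k$ satisfies $H \leq \overline{\Gamma_k}$.

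Next I would invoke the classical pro-$p$ correspondence: since $\Gamma_k$ is finitely generated and residually $p$-finite, the closure map gives a bijection between $p$-power finite-index subgroups of $\Gamma_k$ and open subgroups of $\overline{\Gamma_k}$, with inverse $U \mapsto U \cap \Gamma_k$, preserving indices, and satisfying $\overline{A \cap B} = \overline{A} \cap \overline{B}$ for such $A, B \leq \Gamma_k$. Applying this both to $\Delta$ and to $\Delta \cap \Gamma$ yields the identity $\overline{\Delta} \cap G(\Z_p) = \overline{\Delta \cap \Gamma}$, whence
\[
c(G(\Z_p), \overline{\Delta}) = [G(\Z_p) : \overline{\Delta \cap \Gamma}] \cdot [\overline{\Delta} : \overline{\Delta \cap \Gamma}] = [\Gamma : \Delta \cap \Gamma] \cdot [\Delta : \Delta \cap \Gamma] = c(G(\Z), \Delta).
\]
Restricting the pro-$p$ bijection to the locus where this common value equals $p^k$ then delivers the desired one-to-one correspondence.

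The chief obstacle I anticipate is verifying the identity $\overline{\Delta} \cap G(\Z_p) = \overline{\Delta \cap \Gamma}$ cleanly. The inclusion $\supseteq$ is immediate from monotonicity of closure; the reverse relies on the standard fact that intersection of open subgroups in a pro-$p$ completion corresponds to intersection of their preimages in the original group. I would deduce this by observing that $G(\Z_p) = \overline{\Gamma}$ itself corresponds to $\Gamma \leq \Gamma_k$ under the pro-$p$ bijection, so $\overline{\Delta} \cap G(\Z_p)$ must be the closure of the subgroup of $\Gamma_k$ corresponding to it, namely $\Delta \cap \Gamma$.
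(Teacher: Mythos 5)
Your proof follows essentially the same route as the paper's: fix $\Gamma_k := G(\Z)^{p^{-k}}$, show that every $\Delta \in \CL_{p^k}(G)$ and every relevant closed $H \leq G(\Q_p)$ lives inside $\Gamma_k$ (respectively $\overline{\Gamma_k}$), and then invoke the index-preserving bijection between $p$-power finite-index subgroups of a finitely generated residually-$p$ nilpotent group and finite-index open subgroups of its pro-$p$ completion. The one place where you go a bit beyond the paper is the explicit verification that $\overline{\Delta} \cap G(\Z_p) = \overline{\Delta \cap G(\Z)}$, which is exactly what is needed to pass from the index-preservation of the pro-$p$ correspondence to the equality $\comIndex(G(\Z_p),\overline{\Delta}) = \comIndex(G(\Z),\Delta)$; the paper leaves that step implicit, so your version is if anything slightly more complete.
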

\begin{proof}
  Let $\Gamma = G(\Z)^{p^{-k}}$. Note that $\Gamma$ contains every
  subgroup in $\CL_{p^k}(G)$. Moreover, by Item \ref{item:rootsb} of
  Lemma \ref{lem:roots}, we have that $\Gamma^{p^{N}} \leq G(\Z)$ for
  $N = kc(c+1)/2$, where $c$ is the nilpotence class of $G$.  It
  follows that any $\Delta \in \CL_{p^k}(G)$ contains
  $\Gamma^{p^{N+k}}$. Note that $\Gamma^{p^{N+k}}$ is a normal
  subgroup of $\Gamma$ and the quotient group
  $\Gamma / \Gamma^{p^{N+k}}$ is a finite $p$-group. It follows that
  for each $\Delta \in \CL_{p^k}(G)$ there is some $\ell \in \N$ such
  that $[\Gamma : \Delta] = p^\ell$.

  Let $K$ be the closure of $\Gamma$ in $G(\Q_p)$. Then $K$ is a
  finitely generated pro-$p$ group. Any closed subgroup
  $H \leq G(\Q_p)$ satisfying $c(H, G(\Z_p)) = p^k$ is a finite-index
  subgroup of $G(\Z_p)^{p^{-k}} = K$. The closure map gives an
  index-preserving bijection between subgroups of $\Gamma$ with index
  a power of $p$ and finite-index subgroups of $K$. This completes the
  proof.
\end{proof}

Define $G_i = \left< x_1^{r_1}, \ldots, x_i^{r_i} : r_1, \ldots, r_i \in \Q_p \right>$. The following is an
extension of the notion of good basis from \cite{MR943928} to the
$G(\Q_p)$ setting.
\begin{definition} \label{def:goodbasis}
Let $H$ be a closed subgroup of $G(\Q_p)$.
An $n$-tuple $(h_1, \ldots, h_n)$ of elements in $H$ is called a {\bf good basis} if for each $i = 1, \ldots, n$, 
$$
\left< h_1, \ldots, h_i \right>^- = H \cap G_i.
$$
\end{definition}

\begin{lemma} \label{lem:goodbasis}
Let $h_i \in G_i \setminus G_{i-1}$ for $i = 1,\ldots, n$ and let $H = \left< h_1, \ldots, h_n \right>^-$.
Then:
\begin{enumerate}
\item \label{item:goodbasis1}$\comIndex(H, G(\Z_p)) < \infty$.
\item\label{item:goodbasis2}  $(h_1, \ldots, h_n)$ is a good basis for $H$ if and only if
\begin{equation} \label{lem:eqn1}
[h_i, h_j] \in \left< h_1, \ldots, h_{i-1} \right>^- \text{ for } 1 \leq i < j \leq n.
\end{equation} 
\item\label{item:goodbasis3} Suppose Equation \ref{lem:eqn1} holds, and let $h_1', \ldots, h_n' \in H$.
Then $(h_1', \ldots, h_n')$ is a good basis for $H$ if and only if there exists $r_i \in \Z_p^*$ and $w_i \in \left< h_1, \ldots, h_{i-1}\right>^-$ such that
$$h_i' = w_i h_i^{r_i} \text{ for } 1 \leq i \leq n.$$
\end{enumerate}
\end{lemma}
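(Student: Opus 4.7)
The plan is to carry the discrete analog \cite[Lemma 2.3]{MR943928} into the $p$-adic setting via the Mal'cev coordinate map $\mathbf{x}\colon \Q_p^n \to G(\Q_p)$. The key point is that the polynomials $\mu^{(i)}$, $\lambda$, and $\kappa$ are defined over $\Q$ and extend continuously to $\Q_p$, giving $G(\Q_p)$ the structure of a $\Q_p$-analytic group with $\mathbf{x}$ a global chart identifying $G(\Z_p)$ with $\Z_p^n$. Each $h_i \in G_i \setminus G_{i-1}$ then takes the form $\mathbf{x}(a_{i,1}, \ldots, a_{i,i}, 0, \ldots, 0)$ with $a_{i,i} \in \Q_p^*$, so the matrix of Mal'cev coordinates is triangular with nonzero diagonal.

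For Item \ref{item:goodbasis1}, the plan is to show first that $H \leq G(\Z_p)^{p^{-N}}$ for $N$ sufficiently large: by clearing denominators in the polynomial $\lambda$, one arranges that $h_i^{p^N} \in G(\Z_p)$ for $N \gg 0$, so $h_i \in G(\Z_p)^{p^{-N}}$. In parallel, I would argue inductively on $i$ that the projection of $H$ to $G_i/G_{i-1} \cong \Q_p$ contains an open subgroup (using $a_{i,i} \neq 0$), so $H$ is an open subgroup of the compact group $G(\Z_p)^{p^{-N}}$ (compactness via Lemma \ref{lem:roots}). This gives $H \cap G(\Z_p)$ finite index in both $H$ and $G(\Z_p)$. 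For Item \ref{item:goodbasis2}, the forward direction is immediate: the Mal'cev basis satisfies $[G, G_j] \subseteq G_{j-1}$, hence $[h_i, h_j] \in G_{i-1}$ for $i < j$, and the good-basis hypothesis forces $[h_i, h_j] \in H \cap G_{i-1} = \langle h_1, \ldots, h_{i-1}\rangle^-$.

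The reverse direction of Item \ref{item:goodbasis2} is the heart of the argument. Assuming Equation \ref{lem:eqn1}, the plan is to run the standard collection process (exactly as in \cite{MR943928}) to prove that every element of the abstract group $\langle h_1, \ldots, h_n\rangle$ admits a normal form $h_1^{e_1} \cdots h_n^{e_n}$ with $e_j \in \Z$. The plan is then to extend this normal form to $e_j \in \Z_p$ by continuity of the iterated word map (expressible as a polynomial via $\mu^{(n)}$) together with density of $\Z$ in $\Z_p$. The triangular structure of the Mal'cev matrix then shows that such an element lies in $G_i$ iff $e_{i+1} = \cdots = e_n = 0$, establishing $H \cap G_i = \langle h_1, \ldots, h_i\rangle^-$ for each $i$.

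Item \ref{item:goodbasis3} then follows by combining the $\Z_p$-normal form with the characterization that $(h_1', \ldots, h_n')$ is a good basis iff each $h_i' \in H \cap G_i$ and its image in $(H \cap G_i)/(H \cap G_{i-1})$ generates topologically. Expressing $h_i' = h_1^{e_{i,1}} \cdots h_i^{e_{i,i}}$ in normal form and setting $w_i = h_1^{e_{i,1}} \cdots h_{i-1}^{e_{i,i-1}}$ and $r_i = e_{i,i}$, the topological generation condition on the rank-one quotient $(H \cap G_i)/(H \cap G_{i-1})$ translates exactly to $r_i \in \Z_p^*$. The main obstacle throughout is the rigorous passage from $\Z$- to $\Z_p$-exponents: one must verify that the iterated word map $(e_1, \ldots, e_n) \mapsto h_1^{e_1} \cdots h_n^{e_n}$ on $\Q_p^n$ is a polynomial (hence continuous) map whose restriction to $\Z_p^n$ has image all of $H$, so that the abstract normal form in the discrete case genuinely sweeps out the $p$-adic closure.
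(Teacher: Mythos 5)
Your proposal is substantively correct and shares the paper's main skeleton: pass to Mal'cev coordinates, observe the lower-triangular structure with nonzero diagonal, establish containment $H \leq G(\Z_p)^{p^{-N}}$ by clearing $p$-denominators, and reduce Items \ref{item:goodbasis2} and \ref{item:goodbasis3} to a $p$-adic adaptation of the corresponding discrete statement in Grunewald--Segal--Smith. The one place you diverge is Item \ref{item:goodbasis1}. The paper concludes directly: after embedding $H$ in $G(\Z_p)^{p^{-k}}$, it invokes Lemma \ref{lem:roots}(\ref{item:rootsb}) to produce $m$ with $H^m \leq G(\Z_p)$, and from this reads off that $H \cap G(\Z_p)$ has full dimension, giving finite commensurability index. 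You instead show $H$ is an open subgroup of $G(\Q_p)$ by an induction along the filtration $(G_i)$, using that each $a_{i,i} \neq 0$ makes the image of $H \cap G_i$ in $G_i/G_{i-1} \cong \Q_p$ open, and then use compactness of $G(\Z_p)^{p^{-N}}$ to conclude. Both routes are valid; the paper's is more direct (one appeal to an already-proved lemma), while yours substitutes a transparent topological argument for the somewhat terse ``same dimension'' claim. Your expansion of the $\Z \to \Z_p$ passage for Items \ref{item:goodbasis2}--\ref{item:goodbasis3} (collection process over $\Z$, continuity and density to pass to $\Z_p$-exponents, reading the triangular Mal'cev matrix to identify $H \cap G_i$) is exactly the content the paper compresses into ``small modifications of the proof of Lemma 2.1,'' and is correct.
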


\begin{proof}
Since $h_i \in G_i \setminus G_{i-1}$ for $i = 1, \dots, n$, we have
$$
h_i = x_1^{r_{i1}} x_2^{r_{i2}} \cdots x_n^{r_{in}},
$$
where $r_{ij} \in \Q_p$.
For any such nonzero $r_{ij}$ we have $r_{ij} = u p^{-k}$ where $k \in \Z$ and $u \in \Z_p^*$.
If all such $k$ are nonpositive, then we appeal to \cite[Lemma 2.1]{MR943928}.
Otherwise, let $k$ be the maximal integer that appears in this way.
Then 
$$H \leq \left< x_1^{p^{-k}}, x_2^{p^{-k}}, \ldots, x_n^{p^{-k}} \right>^- \leq G(\Z_p)^{p^{-k}}.$$
By continuity of taking powers, $G(\Z_p)^{p^{-k}}$ is the closure of $G(\Z)^{p^{-k}}$ in $G(\Q_p)$.
Thus, by applying Item \ref{item:rootsb} of Lemma \ref{lem:roots}, we have that there exists $m \in \N$ such that $H^m \leq G(\Z_p)$.
It follows that $G(\Z_p) \cap H$ must have the same dimension as $G$, and so Item \ref{item:goodbasis1} follows.

Items \ref{item:goodbasis2} and \ref{item:goodbasis3} follow from small modifications of the proof of \cite[Lemma 2.1]{MR943928}.
\end{proof}

\begin{definition} \label{def:maindef}
For $\Delta \in \CL(G)$, define $\setU(\Delta)$ to be the collection of pairs $(A, B)$ where $A \in T_n(\Q_p)$, $B \in T_n(\Z_p)$, such that, if $a_i$ denotes the $i$th row of $A$ and $b_i$ denotes the $i$th row of $B$, 
\begin{enumerate}
\item \label{item:def1} $({\bf x}(a_1), \ldots, {\bf x}(a_n))$ is a
  good basis for $\overline{\Delta}$, and
\item \label{item:def3} $({\bf x}(b_1), \ldots,{\bf  x}(b_n))$ is a good basis for $\overline{\Delta} \cap G(\Z_p)$.
\end{enumerate}
\end{definition}

Combining suitable $\setU(\Delta)$ into one set, we define $$\setU_p := \bigcup_{k=1}^\infty \left( \bigcup_{\Delta \in \CL_{p^k}(G)} \setU(\Delta) \right).$$ 
The next proposition shows that $\setU_p$ is $L_p$-defined in the sense of \cite{MR1076249} (that is, it can be defined using first-order logic, $p$-norms, and field operations). Note that $|x|_p \geq | y |_p$ can be stated in $L_P$ (see the example in \cite[Page 71]{MR1076249}).

\begin{lemma} \label{lem:mainconditions}
Let $(A,B) \in T_n(\Q_p) \times T_n(\Z_p)$. Then
$(A,B) \in \setU_p$ if and only if each of the following holds.
\begin{enumerate}
\item \label{item:maincon1} $\det(A) \neq 0$ and the rows $a_1,\dotsc, a_n$ of the matrix $A$ satisfy
\begin{eqnarray*}
\text{ for } 1 \leq i < j \leq n \text{ there exist } Y_{ij}^1, \ldots, Y_{ij}^{i-1} \in \Z_p \text{ such that }  \\
\kappa (a_i, a_j) = \mu^{(i-1)} (\lambda(a_1, Y_{ij}^1) ,
  \ldots, \lambda(a_{i-1}, Y_{ij}^{i-1} )).
\end{eqnarray*}
\item \label{item:maincon3} 
$\det(B) \neq 0$ and the rows $b_1, \ldots, b_n$ of the matrix $B$ satisfy
\begin{eqnarray*}
\text{ for } 1 \leq i < j \leq n \text{ there exist } Y_{ij}^1, \ldots, Y_{ij}^{i-1} \in \Z_p \text{ such that }  \\
\kappa (b_i, b_j) = \mu^{(i-1)} (\lambda(b_1, Y_{ij}^1) ,
  \ldots, \lambda(b_{i-1}, Y_{ij}^{i-1} )).
\end{eqnarray*}
Moreover, let ${\bf y} : \Z_p^n \to G(\Q_p)$ be the function defined by
$$
{\bf y} ({\bf d}) := {\bf x}(a_1)^{d_1} {\bf x}(a_2)^{d_2} \cdots {\bf x}(a_n)^{d_n}.
$$
Then there exists vectors $c_1, \ldots, c_i \in \Z_p^n$ such that for all $i =1 , \ldots, n$, 
$$
{\bf y}(c_i) = {\bf x}(b_i).
$$
\item \label{item:maincon4} $|\det(B)|_p$ is maximal over all $B$ satisfying \ref{item:maincon3}.

\end{enumerate}
\end{lemma}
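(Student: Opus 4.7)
The plan is to prove both directions of the equivalence by using Lemma \ref{lem:goodbasis} together with Lemma \ref{lem:correspondence}, with the Hall--Petresco--style polynomials $\lambda, \kappa, \mu^{(i-1)}$ serving as the dictionary translating the abstract group relations of a good basis into polynomial equations on matrix entries.

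For the forward direction, assume $(A,B) \in \setU(\Delta)$ for some $\Delta \in \CL_{p^k}(G)$. Condition \ref{item:maincon1} is the translation of Lemma \ref{lem:goodbasis}(\ref{item:goodbasis2}) applied to the good basis $({\bf x}(a_1),\ldots,{\bf x}(a_n))$ of $\overline{\Delta}$: each commutator $[{\bf x}(a_i),{\bf x}(a_j)]$ lies in $\langle {\bf x}(a_1),\ldots,{\bf x}(a_{i-1})\rangle^{-}$, and because this closure is the pro-$p$ completion of $\langle {\bf x}(a_1),\ldots,{\bf x}(a_{i-1})\rangle$, we can express it as a product of powers ${\bf x}(a_l)^{Y_{ij}^l}$ with $Y_{ij}^l \in \Z_p$; applying $\kappa$, $\lambda$, and $\mu^{(i-1)}$ gives the stated identity. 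The analogous argument for $({\bf x}(b_1),\ldots,{\bf x}(b_n))$ as a good basis for $\overline{\Delta}\cap G(\Z_p)$ gives the commutator part of condition \ref{item:maincon3}. Since each ${\bf x}(b_i) \in \overline{\Delta}$ and $({\bf x}(a_j))$ is a good basis, coordinates of the second kind place each ${\bf x}(b_i)$ in the image of ${\bf y}$ with $c_i \in \Z_p^n$, establishing the rest of \ref{item:maincon3}. Finally, condition \ref{item:maincon4} follows from the standard fact that for a good basis of a closed subgroup $K \le G(\Z_p)$, $|\det(\cdot)|_p$ equals the Haar-measure ratio $[G(\Z_p):K\cap G(\Z_p)]^{-1}$, so any $B'$ satisfying condition \ref{item:maincon3} generates a closed subgroup of $\overline{\Delta}\cap G(\Z_p)$ and hence has $|\det(B')|_p \le |\det(B)|_p$.

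For the reverse direction, assume $(A,B)$ satisfies the three conditions. Let $H := \langle {\bf x}(a_1),\ldots,{\bf x}(a_n)\rangle^{-}$. Condition \ref{item:maincon1} combined with Lemma \ref{lem:goodbasis}(\ref{item:goodbasis2}) makes $({\bf x}(a_i))$ a good basis for $H$, and Lemma \ref{lem:goodbasis}(\ref{item:goodbasis1}) shows $\comIndex(H,G(\Z_p)) < \infty$, necessarily of the form $p^k$ since $H \le G(\Z_p)^{p^{-k}}$ for some $k$. By Lemma \ref{lem:correspondence}, $H = \overline{\Delta}$ for a unique $\Delta \in \CL_{p^k}(G)$. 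The ${\bf y}$-identities in condition \ref{item:maincon3} place each ${\bf x}(b_i)$ in $H$, and the hypothesis $B \in T_n(\Z_p)$ places each ${\bf x}(b_i)$ in $G(\Z_p)$, so $K := \langle {\bf x}(b_1),\ldots,{\bf x}(b_n)\rangle^{-} \le \overline{\Delta}\cap G(\Z_p)$, with $({\bf x}(b_i))$ a good basis for $K$ by the commutator part of \ref{item:maincon3} and Lemma \ref{lem:goodbasis}(\ref{item:goodbasis2}). The maximality in \ref{item:maincon4}, via the determinant--index formula, forces $K = \overline{\Delta}\cap G(\Z_p)$, so $(A,B) \in \setU(\Delta) \subseteq \setU_p$.

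The main obstacle I expect is the precise justification of the determinant--index formula in this $p$-adic setting: namely, that whenever $({\bf x}(b_1),\ldots,{\bf x}(b_n))$ is a good basis of a closed subgroup $K \le G(\Z_p)$, the $p$-adic absolute value $|\det(B)|_p$ records exactly $[G(\Z_p):K]^{-1}$, so that maximizing $|\det(B)|_p$ over subgroups of $\overline{\Delta} \cap G(\Z_p)$ singles out $\overline{\Delta} \cap G(\Z_p)$ itself. This requires reducing modulo $p^N$ (using Lemma \ref{lem:roots}(\ref{item:rootsb}) to arrange that all relevant groups contain $G(\Z_p)^{p^N}$) to apply the classical finite-index version from \cite{MR943928}, and then passing back to the limit. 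The translation between the abstract good-basis condition of Lemma \ref{lem:goodbasis}(\ref{item:goodbasis2}) and the first-order polynomial form using $\kappa,\lambda,\mu^{(i-1)}$ is formal but must be stated carefully so that the $Y_{ij}^l$ indeed range over $\Z_p$, not $\Q_p$, which is what makes $\setU_p$ an $L_p$-definable set.
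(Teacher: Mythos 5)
Your proof is correct and follows essentially the same approach as the paper's: apply Lemma \ref{lem:goodbasis} to translate the good-basis property into the polynomial conditions, use Lemma \ref{lem:correspondence} to recover $\Delta$ from $H$ in the reverse direction, and invoke the determinant--index identity to see that Condition \ref{item:maincon4} pins down $\overline{\Delta}\cap G(\Z_p)$. Your writeup is in fact more detailed than the paper's terse version, and the two subtleties you flag at the end (the $\Z_p$-coordinatization of $\langle h_1,\ldots,h_{i-1}\rangle^-$ and the $p$-adic determinant--index formula) are precisely the points the paper handles by citing \cite[Lemma 2.1, pp.\ 198]{MR943928} and its own Lemma \ref{lem:comindex}.
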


\begin{proof} 
  Let $(A, B) \in \setU(\Delta)$. Then Conditions
  \ref{item:maincon1} and \ref{item:maincon3} are satisfied by
  Lemma~\ref{lem:goodbasis}. To see
  Condition~\ref{item:maincon4}, note that $|\det(B)|_p = [G(\Z) :
  G(\Z) \cap \Delta]$, while any matrix $B'$ satisfying
  Condition~\ref{item:maincon3} determines a subgroup of $G(\Z_p) \cap \overline{\Delta}$ and hence satisfies $|\det(B')|_p \leq
  [G(\Z) : G(\Z) \cap \Delta]$. 

  Let $(A, B) \in \setU_p$. Let $H$ be the closed subgroup of
  $G(\Q_p)$ generated by ${\bf x}(a_i)$ where $a_i$ are the rows or
  $A$ and let $\Delta$ be the subgroup in $\CL_{p^\ell}(G)$
  corresponding to $H$ given by Lemma \ref{lem:correspondence}. The conditions above ensure
  $(A, B) \in \setU(\Delta)$.
\end{proof}

Our parametrization of arithmetic lattices gives a nice formula for the commensurability index:

\begin{lemma} \label{lem:comindex}
Let $\Delta \in \CL(G)$. Then for any $(A, B) \in \setU(\Delta)$, we have
$$
\comIndex(\overline{\Delta}, G(\Z_p)) = |\det (B)^2\det(A)^{-1}|_p^{-1}.
$$
\end{lemma}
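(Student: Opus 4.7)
The plan is to write $\comIndex(\overline{\Delta}, G(\Z_p))$ as a product of two ordinary subgroup indices and to compute each via the determinant of a good basis matrix. Setting $K := \overline{\Delta} \cap G(\Z_p)$, I have $\comIndex(\overline{\Delta}, G(\Z_p)) = [\overline{\Delta} : K] \cdot [G(\Z_p) : K]$, and both factors should be expressible in terms of $A$ and $B$.

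For the factor $[G(\Z_p) : K]$, since $({\bf x}(b_1), \ldots, {\bf x}(b_n))$ is a good basis for $K$ and $B \in T_n(\Z_p)$ is lower triangular, the Mal'cev filtration $\{G_i\}$ yields a multiplicative index formula: $K \cap G_i = \langle {\bf x}(b_1), \ldots, {\bf x}(b_i)\rangle^-$, and each successive quotient $(K \cap G_i)/(K \cap G_{i-1})$ embeds as a subgroup of index $|b_{ii}|_p^{-1}$ inside $(G(\Z_p) \cap G_i)/(G(\Z_p) \cap G_{i-1}) \cong \Z_p$; the product over $i$ gives $[G(\Z_p) : K] = |\det B|_p^{-1}$. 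For the factor $[\overline{\Delta} : K]$, I would use the matrix $C = (c_{ij})$ from Condition \ref{item:maincon3} of Lemma \ref{lem:mainconditions} satisfying ${\bf y}(c_i) = {\bf x}(b_i)$, with the key observation that $C \in T_n(\Z_p)$ is itself lower triangular: since ${\bf x}(b_i) \in \overline{\Delta} \cap G_i = \langle{\bf x}(a_1), \ldots, {\bf x}(a_i)\rangle^-$, uniqueness of the Mal'cev parametrization $d \mapsto {\bf y}(d)$ on $\overline{\Delta}$ forces $c_{ij} = 0$ for $j > i$. Applying the analogous filtration argument to the chain $\overline{\Delta}_i := \overline{\Delta} \cap G_i$ yields $[\overline{\Delta} : K] = |\det C|_p^{-1}$.

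It remains to relate $\det C$ to $\det A$ and $\det B$: comparing $i$-th Mal'cev coordinates in the identity ${\bf x}(b_i) = {\bf x}(a_1)^{c_{i1}} \cdots {\bf x}(a_i)^{c_{ii}}$ gives $b_{ii} = c_{ii} a_{ii}$, because each factor ${\bf x}(a_j)^{c_{ij}}$ with $j < i$ lies in $G_{i-1}$ and contributes nothing to the $i$-th coordinate, while ${\bf x}(a_i)^{c_{ii}} = {\bf x}(\lambda(a_i, c_{ii}))$ contributes $c_{ii} a_{ii}$ by the cyclic quotient structure of $G_i/G_{i-1}$. Multiplying diagonals gives $\det B = \det A \cdot \det C$, and combining the two index computations produces $|\det B|_p^{-1} |\det C|_p^{-1} = |\det(B)^2 \det(A)^{-1}|_p^{-1}$, as required. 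The main obstacle I anticipate is justifying the index formula for $[\overline{\Delta}:K]$, where $\overline{\Delta} \not\subset G(\Z_p)$ and entries of $A$ may have negative $p$-adic valuation: one must verify that each successive quotient $\overline{\Delta}_i / \overline{\Delta}_{i-1}$ is a closed $\Z_p$-submodule of $G_i/G_{i-1} \cong \Q_p$, isomorphic to $a_{ii}\Z_p$ via the image of ${\bf x}(a_i)$, so that the image of $K_i/K_{i-1}$ sits inside with index exactly $|c_{ii}|_p^{-1}$.
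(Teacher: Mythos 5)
Your proposal is correct and follows essentially the same route as the paper: both decompose $\comIndex(\overline{\Delta}, G(\Z_p))$ as $[\overline{\Delta}:K][G(\Z_p):K]$ with $K = \overline{\Delta}\cap G(\Z_p)$, compute each factor as the $p$-norm inverse of a triangular determinant (the paper citing \cite[pp.~198]{MR943928} where you spell out the filtration argument), introduce the same change-of-basis matrix $C$ with ${\bf y}(c_i) = {\bf x}(b_i)$, and extract $b_{ii} = a_{ii}c_{ii}$ to obtain $\det B = \det A\cdot\det C$ and thence the claimed formula. Your added care about the lower-triangularity of $C$ and the negative-valuation issue for entries of $A$ are details the paper leaves implicit, but the argument is the same one.
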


\begin{proof}
A direct application of the results of \cite[pp. 198]{MR943928} shows
\begin{equation} \label{eqn:comindex1}
  [G(\Z_p):G(\Z_p) \cap \overline{\Delta}] = |\det(B)|_p^{-1}.
\end{equation}

Let $C$ be the matrix whose $i$th row $c_i$ satisfies
${\bf y}(c_i) = {\bf x}(b_i)$, where ${\bf y}$ is the 
function defined in Lemma \ref{lem:mainconditions}. 
Note that $C \in T_n(\Z_p)$. 
Because $\overline{\Delta} = {\bf y}(\Z_p)$, it follows as above from
\cite[pp. 198]{MR943928} that
\begin{equation}\label{eqn:comindex2}
  [\overline{\Delta} : G(\Z_p) \cap \overline{\Delta}] = |\det(C)|_p^{-1}.
\end{equation}

Using the fact that the rows of $B$ are coordinate vectors of a good
basis, computation shows that
\[
  b_i  = ( q_{i1}, \dotsc, q_{i(i-1)}, a_{ii}c_{ii}, 0,\dotsc, 0 ),
\]
where $q_{ij} \in \Q_p$ for all $j< i$ and $a_{ii}$ and $c_{ii}$ are
the respective diagonal elements of $A$ and $C$. It follows that
\begin{equation} 
  |\det(B)|_p^{-1} = |\det(AC)|_p^{-1}.
\end{equation}
The desired result follows.
\end{proof}

For $n \in \N$, let $\nu$ be the Haar measure on $\Q_p^n$ normalized such that $\nu(\Z_p^n) = 1$.

\begin{lemma} \label{lem:volcomp}
For a matrix $M \in T_n(\Q_p)$, let $f(M) := \prod_{i=1}^n m_{11} \cdots m_{ii}$.
Let $\Delta \in \CL(G)$. Then $\setU(\Delta)$ is an open subset of $T_n(\Q_p) \times T_n(\Z_p)$ and 
$$\nu(\setU(\Delta)) = (1-p^{-1})^{2n}  | f(A) f(B) |_p, $$
for any $(A,B) \in \setU(\Delta)$.
\end{lemma}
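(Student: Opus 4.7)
The plan is to explicitly parametrize $\setU(\Delta)$ using Lemma \ref{lem:goodbasis}(\ref{item:goodbasis3}), compute the Jacobian of this parametrization row-by-row, and apply the $p$-adic change-of-variables formula; openness will emerge as a byproduct.

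Fix $(A_0, B_0) \in \setU(\Delta)$ with rows $a_i^0$ and $\beta_i^0$ respectively, and set $h_i = {\bf x}(a_i^0)$. By Lemma \ref{lem:goodbasis}(\ref{item:goodbasis3}), a tuple $(h_1', \ldots, h_n')$ is a good basis for $\overline{\Delta}$ if and only if $h_i' = w_i h_i^{r_i}$ for unique $r_i \in \Z_p^*$ and $w_i \in \langle h_1, \ldots, h_{i-1}\rangle^- = \overline{\Delta} \cap G_{i-1}$. The latter closure admits the Mal'cev-style parametrization $(s_{i1},\dotsc, s_{i,i-1}) \mapsto h_1^{s_{i1}} \cdots h_{i-1}^{s_{i,i-1}}$ by $\Z_p^{i-1}$, yielding a continuous bijection
\[
\Phi_A : (\Z_p^*)^n \times \Z_p^{n(n-1)/2} \longrightarrow \{ A \in T_n(\Q_p) : \text{rows of } A \text{ form a good basis for } \overline{\Delta}\}.
\]
An analogous parametrization $\Phi_B$ lands in $T_n(\Z_p)$ and describes the good bases of $\overline{\Delta} \cap G(\Z_p)$.

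The Jacobian of $\Phi_A$ is block-triangular: because the Mal'cev basis is compatible with $\{G_i\}$, the $i$th row of $\Phi_A(s,r)$ depends only on $(s_{i1},\dotsc, s_{i,i-1}, r_i)$. Within each block, a direct computation with the polynomials $\lambda$, $\mu^{(i-1)}$, $\kappa$ shows $\partial a_{ij}'/\partial s_{ij} = a_{jj}^0$, $\partial a_{ij}'/\partial s_{ik} = 0$ for $k > j$, $\partial a_{ii}'/\partial r_i = a_{ii}^0$, and $\partial a_{ii}'/\partial s_{ij} = 0$ --- the last equality because $w_i \in G_{i-1}$ contributes nothing to the $i$th Mal'cev coordinate. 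Each block is therefore lower-triangular with diagonal $(a_{11}^0, \ldots, a_{ii}^0)$, so $|\det D\Phi_A|_p = \prod_{i=1}^n |a_{11}^0\cdots a_{ii}^0|_p = |f(A_0)|_p$. Since $a_{ii}' = r_i a_{ii}^0$ with $r_i \in \Z_p^*$ forces $|a_{ii}'|_p = |a_{ii}^0|_p$, this value equals $|f(A)|_p$ uniformly on the image.

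Applying the $p$-adic change of variables with $\nu(\Z_p^*) = 1 - p^{-1}$ and $\nu(\Z_p) = 1$ then gives
\[
\nu(\text{image of } \Phi_A) = \int_{(\Z_p^*)^n \times \Z_p^{n(n-1)/2}} |f(A)|_p\, ds\, dr = (1-p^{-1})^n |f(A)|_p,
\]
and the analogous computation for $\Phi_B$ (carried out entirely in $\Z_p$) gives $(1-p^{-1})^n |f(B)|_p$; multiplying yields the stated volume. Openness of $\setU(\Delta)$ is automatic, since $\Phi_A$ and $\Phi_B$ are polynomial maps with nonvanishing Jacobians on open subsets of $\Q_p^{n(n+1)/2}$, hence are open maps onto their images. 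The main obstacle is verifying the triangular block structure of $D\Phi_A$: it requires tracking how $\mu^{(i-1)}$ and $\lambda$ interact in each Mal'cev coordinate of $\Phi_A(s,r)$, which is the $p$-adic analogue of the local computations on pp.~197--198 of \cite{MR943928}.
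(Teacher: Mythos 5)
Your proof takes the same approach as the paper's (which simply delegates to the two halves of \cite[Lemma 2.5]{MR943928}): parametrize the good bases of $\overline{\Delta}$ via Lemma~\ref{lem:goodbasis}(\ref{item:goodbasis3}), compute the Jacobian of that parametrization, and use the $p$-adic change of variables plus Fubini to conclude. The argument is correct, but one detail in the Jacobian computation is stated backwards. With the central series convention used here ($G_i=\langle x_1,\dotsc,x_i\rangle$ with $G_i/G_{i-1}$ central in $G/G_{i-1}$), commutator corrections propagate to \emph{lower} Mal'cev coordinates, so the $j$th coordinate of a product picks up corrections from coordinates $k>j$ of its factors, not $k<j$. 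Concretely, writing $h_i' = h_1^{s_{i1}}\cdots h_{i-1}^{s_{i,i-1}} h_i^{r_i}$ and using lower-triangularity of $A_0$ (so that $a^0_{kl}=0$ for $l>k$), one gets $\partial a_{ij}'/\partial s_{ik}=0$ for $k<j$ (not $k>j$), $\partial a_{ij}'/\partial s_{ij}=a^0_{jj}$, and $\partial a_{ij}'/\partial s_{ik}$ is generically nonzero for $k>j$ (e.g.\ in the abelian case $a_{31}'=s_{31}a^0_{11}+s_{32}a^0_{21}+r_3 a^0_{31}$ has $\partial a_{31}'/\partial s_{32}=a^0_{21}\neq 0$). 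Each block is therefore \emph{upper}-triangular rather than lower-triangular. Since the diagonal is still $(a^0_{11},\dotsc,a^0_{ii})$ the determinant and everything downstream are unaffected, so this is a harmless transposition rather than a gap; the remaining steps (the identity $a_{ii}'=r_ia^0_{ii}$ giving uniformity of $|f|_p$ on the image, the product $\Phi_A\times\Phi_B$ structure, and openness from nonvanishing Jacobian) are all sound.
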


\begin{proof}
  For $\Delta \in \CL(G)$ and a ring $R$, let $\mathcal{M}_R(\Delta)$ be the set of
  matrices in $T_n(R)$ such that the rows $m_1, \ldots, m_n$ of $M$
  satisfy ${\bf x}(m_1), \ldots, {\bf x}(m_n)$ are a good basis for
  $\overline{\Delta}$.  Notice that
  $\setU( \Delta ) = \mathcal{M}_{\Q_p}( \Delta) \times
  \mathcal{M}_{\Z_p}(G(\Z) \cap \Delta)$.  Hence, that $\setU(\Delta)$
  is an open subset of $T_n(\Q_p) \times T_n(\Z_p)$ is a
  straightforward application of the first part of the proof of
  \cite[Lemma 2.5, pp. 198]{MR943928}.

Moreover, the second part of the proof of \cite[Lemma 2.5, pp. 198]{MR943928} and Fubini's Theorem gives:
\begin{eqnarray*}
\int_{\setU(\Delta)} 1 d\nu &=& \int_{\mathcal{M}_{\Q_p}({\Delta}) \times \mathcal{M}_{\Z_p}(G(\Z) \cap \Delta)} 1 d\nu \\
&=& \int_{\mathcal{M}_{\Q_p}({\Delta})}\int_{\mathcal{M}_{\Z_p}(G(\Z) \cap {\Delta})} 1 d\nu \\
&=&  \int_{\mathcal{M}_{\Q_p}{\Delta})} (1-p^{-1})^n |f(B)|_p d\nu \\
&=&  (1-p^{-1})^{2n} |f(A)f(B)|_p .
\end{eqnarray*}

\end{proof}

\begin{proposition} \label{prop:zetaintegral}
Let $f$ be as in Lemma \ref{lem:volcomp}.
We have
$$
\zeta_{G,p}(s) = \frac{1}{(1-p^{-1})^{2n}} \int_{(A, B) \in \setU_p} 
|f(A)f(B)|_p^{-1} |\det(B)^2\det(A)^{-1}|_p^{s}  d\nu.
$$
\end{proposition}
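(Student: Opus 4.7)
The plan is to rewrite the right-hand integral as a sum of integrals over the pieces $\setU(\Delta)$ making up $\setU_p$, observe that the integrand is constant on each piece, and invoke the earlier lemmas to evaluate. The three ingredients I need are: disjointness of the union $\setU_p = \bigcup_\Delta \setU(\Delta)$, constancy of the integrand on each piece, and identification of the relevant constants with $\comIndex(G(\Z), \Delta)$.

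Disjointness is straightforward. If $(A,B) \in \setU(\Delta_1) \cap \setU(\Delta_2)$, then by definition the tuple $({\bf x}(a_1), \ldots, {\bf x}(a_n))$ is a good basis for both $\overline{\Delta_1}$ and $\overline{\Delta_2}$, so these closures coincide as topologically generated subgroups of $G(\Q_p)$. Lemma \ref{lem:correspondence} then forces $\Delta_1 = \Delta_2$. Hence $\setU_p$ decomposes as a disjoint union indexed by $\Delta \in \CPp(G)$.

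For the evaluation on each piece, Lemma \ref{lem:volcomp} implies that $|f(A) f(B)|_p$ is constant on $\setU(\Delta)$, since the volume equality $\nu(\setU(\Delta)) = (1-p^{-1})^{2n} |f(A) f(B)|_p$ holds for \emph{any} choice of $(A,B) \in \setU(\Delta)$. Likewise, Lemma \ref{lem:comindex} shows $|\det(B)^2 \det(A)^{-1}|_p^{-1} = \comIndex(\overline{\Delta}, G(\Z_p))$ is constant on $\setU(\Delta)$. Pulling these constants outside the integral and using the volume formula gives
\[
\int_{\setU(\Delta)} |f(A)f(B)|_p^{-1} \, |\det(B)^2\det(A)^{-1}|_p^{s} \, d\nu = (1-p^{-1})^{2n} \, \comIndex(\overline{\Delta}, G(\Z_p))^{-s}.
\]
Applying Lemma \ref{lem:correspondence} once more to identify $\comIndex(\overline{\Delta}, G(\Z_p))$ with $\comIndex(G(\Z), \Delta)$ and summing over $\Delta \in \CPp(G)$ recovers $(1-p^{-1})^{2n} \, \zeta_{G,p}(s)$, which rearranges to the claimed formula.

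The only technical matter to justify is the interchange of sum and integral. This is essentially cosmetic: each $\setU(\Delta)$ is open (hence measurable) by Lemma \ref{lem:volcomp}, the integrand is nonnegative, and $\CPp(G)$ is countable because each $\CL_{p^k}(G)$ is finite by Lemma \ref{lem:finite}. Monotone convergence then legitimates the term-by-term integration, so the only step that requires any genuine care is unwinding the definitions to see that the constancy of $|f(A)f(B)|_p$ on $\setU(\Delta)$ is really encoded by Lemma \ref{lem:volcomp}.
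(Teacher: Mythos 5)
Your proof is correct and follows essentially the same route as the paper: decompose $\setU_p$ into the disjoint pieces $\setU(\Delta)$, evaluate the constant integrand on each piece via Lemmas \ref{lem:volcomp} and \ref{lem:comindex}, and sum. You actually fill in two details the paper elides — an explicit disjointness argument via good bases and Lemma \ref{lem:correspondence}, and the identification $\comIndex(\overline{\Delta}, G(\Z_p)) = \comIndex(G(\Z), \Delta)$ coming from that same correspondence — so no gaps.
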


\begin{proof}
By Lemma \ref{lem:volcomp}, for any $(A,B) \in \setU(\Delta)$, the integrand evaluates to:
\begin{eqnarray*}
\nu (\setU(\Delta))^{-1} (\comIndex(\Delta, G(\Z))^{-s}.
\end{eqnarray*}
By decomposing $\setU_p$ into disjoint open sets,
$$
\setU_p = \bigcup_{k=1}^\infty \left(  \bigcup_{\Delta \in \CL_{p^k}(G)} \setU(\Delta) \right),
$$
using Lemma \ref{lem:volcomp} we compute:
\begin{eqnarray*}
&& \frac{1}{(1-p^{-1})^{2n}} \int_{\setU_p} \left( |f(A)f(B)|_p^{-1}
   |\det(B)^2 \det(A)^{-1}|_p^{s}  \right) d\nu \\
&=& \sum\limits_{k=1}^\infty  \left( \sum_{\Delta \in \CL_{p^k}(G)} \left( \int\limits_{\setU(\Delta)} \nu (\setU(\Delta))^{-1} (\comIndex(\Delta, G(\Z))^{-s} d \nu \right)\right) \\
&=& \sum\limits_{k=1}^\infty  \left( \sum_{\Delta \in \CL_{p^k}(G)} \left( \int\limits_{\setU(\Delta)} \nu (\setU(\Delta))^{-1} p^{-ks
} d \nu \right)\right) \\
&=& \sum\limits_{k=1}^\infty  \left( \sum_{\Delta \in \CL_{p^k}(G)}p^{-ks}\right) = \sum_{k=1}^\infty \frac{\comGrowth_{p^k}(\CL(G)) }{p^{ks}} = \zeta_{G,p}(s).
\end{eqnarray*}
\end{proof}

We are now ready to prove our main result:
\begin{proof}[Proof of Theorem \ref{thm:main}]
We first show that the local commensurability zeta functions converge for sufficiently large $s$. 
Given $\Delta \in \CL_{p^k} (G)$, for any $\gamma \in \Delta$ we have $\gamma^{p^k} \in G(\Z_p)$ so  $\Delta \leq \Gamma_k := (G(\Z))^{p^{-k}}$.
Then by Item \ref{item:rootsc} of Lemma \ref{lem:roots} there exists $D$, depending only on $G$, such that
$$
|\Gamma_k : G(\Z)| \leq p^{Dk}.
$$
Hence, any $\Delta \in \CL_{p^k}(G)$ is a subgroup of $\Gamma_k$ of index at most $p^{(1+D)k}$, giving
\begin{equation}
\comGrowth_{p^k}(\CL(G)) \leq s_{p^{(1+D)k}} (\Gamma_k) \leq s_{p^{(1+D)k}} (N). \label{ineq:last}
\end{equation}
where $s_m(G)$ is the subgroup growth function (the number of subgroups of index at most $m$) and $N$ is the free nilpotent group of class and rank equal to that of $G(\Z)$.
It follows from \cite{MR943928} that $s_{p^{(1+D)k}}(N) \leq p^{\alpha k}$ for some fixed $\alpha$ that does not depend on $p$. Hence, Inequality (\ref{ineq:last}) gives that $\zeta_{G,p}(s)$ is finite for $s > M$ where $M$ does not depend on $p$.

The integrand in Proposition \ref{prop:zetaintegral} and $\setU_p$ are both $L_p$-definable in the sense of
\cite{MR1076249} (see also \cite{MR751129}) by Lemma \ref{lem:mainconditions}.
Thus, by \cite[Theorem 22]{MR1076249} we have that $\zeta_{G,p}(s)$ is a rational function with numerator and denominator degrees bounded by a constant depending only on $G$.
\end{proof}

\bibliography{refs}
\bibliographystyle{amsalpha}


\noindent
Khalid Bou-Rabee \\
Department of Mathematics, City College of New York \\
E-mail: kbourabee@ccny.cuny.edu \\

\noindent
Daniel Studenmund \\
Department of Mathematics, University of Notre Dame \\
E-mail: dstudenm@nd.edu \\

\end{document}